\documentclass[11pt]{amsart}

\usepackage[T1]{fontenc}
\usepackage[utf8]{inputenc}
\usepackage{amsmath,amssymb,amsthm,mathtools}
\usepackage{xcolor}
\usepackage{array,booktabs,microtype,float}
\usepackage[a4paper,margin=32mm]{geometry}
\usepackage[colorlinks=true,linkcolor=blue!55!black,
  citecolor=blue!55!black,urlcolor=blue!55!black]{hyperref}

\newtheorem{theorem}{Theorem}[section]
\newtheorem{proposition}[theorem]{Proposition}
\newtheorem{lemma}[theorem]{Lemma}

\numberwithin{equation}{section}
\theoremstyle{definition}
\newtheorem{example}[theorem]{Example}

\theoremstyle{remark}

\newcommand{\C}{\mathbb C}
\newcommand{\PP}{\mathbb P}
\newcommand{\Gr}{\operatorname{Gr}}
\newcommand{\LG}{\operatorname{LG}}

\newcommand{\Lie}{\operatorname{Lie}}
\newcommand{\Sym}{\operatorname{Sym}}
\newcommand{\Aut}{\operatorname{Aut}}

\newcommand{\rank}{\operatorname{rank}}

\newcommand{\cC}{\mathcal C}

\newcommand{\bP}{\mathbb{P}}

\newcommand{\fg}{\mathfrak{g}}
\newcommand{\fh}{\mathfrak{h}}

\newcommand{\fl}{\mathfrak{l}}

\newcommand{\fn}{\mathfrak{n}}
\newcommand{\fp}{\mathfrak{p}}

\newcommand{\fu}{\mathfrak{u}}

\title[Heisenberg compactifications of homogeneous varieties]
{Heisenberg Equivariant Compactifications\\
of Rational Homogeneous Varieties}
\author{Cong Ding, Baohua Fu, and Zhijun Luo}

\address{Cong Ding, School of Mathematical Sciences, Shenzhen University, 518060, Guangdong, China}
\email{\href{congding@szu.edu.cn }{congding@szu.edu.cn }}

\address{Baohua Fu, State Key Laboratory of Mathematical Sciences, Morningside Center of Mathematics, Academy of Mathematics and Systems Science, Chinese Academy of Sciences, Beijing 100190, China; and School of Mathematical Sciences, University of Chinese Academy of Sciences, Beijing, China}
\email{\href{bhfu@math.ac.cn}{bhfu@math.ac.cn}}
\urladdr{\href{http://www.math.ac.cn/people/fbh/}{http://www.math.ac.cn/people/fbh/}}

\address{Zhijun Luo, Center for Complex Geometry, Institute for Basic Science, 55 Expo-ro, Yuseong-gu, Daejeon, 34126, Republic of Korea.}
\email{\href{luozj@amss.ac.cn}{luozj@amss.ac.cn}, \href{luozj@ibs.re.kr}{luozj@ibs.re.kr}}
\date{}
\subjclass[2020]{14M17, 14L30, 17B22}
\keywords{rational homogeneous variety, Heisenberg group, equivariant
compactification, adjoint variety, cominuscule varieties}

\begin{document}
\begin{abstract}
Let $G/P$ be a complex projective rational homogeneous variety of dimension $2m+1$. We prove that $G/P$ is an equivariant compactification of the Heisenberg group of dimension $2m+1$ if and only if it is isomorphic to either an adjoint variety, or the 3-dimensional smooth quadric $Q^3$, or a product $\bP^{2m+1-d} \times Y$ with $1 \leq d=\dim Y \leq m$, where $Y$ is a product of cominuscule varieties.
\end{abstract}
\maketitle

\section{Introduction}

We work over $\C$. An \emph{equivariant compactification} of a
connected affine algebraic group $H$ is a complete smooth variety
endowed with an algebraic $H$-action having a Zariski-open orbit
equivariantly isomorphic to $H$, with $H$ acting on itself by left
translations. This notion includes several classical theories. For a torus, one obtains toric varieties, while compactifications of reductive groups and, more generally, embeddings of homogeneous spaces are closely related to spherical varieties and the Luna--Vust theory (see for example \cite{Timashev}).

On the other hand, equivariant compactifications of unipotent groups do not yet have a comparable general embedding theory. 
The simplest case is the vector group $\mathbb G_a^n$. Hassett and Tschinkel (\cite{HassettTschinkel}) related its equivariant compactifications by projective space to finite-dimensional local
algebras. Arzhantsev proved (\cite[Theorem~1]{Arzhantsev}) that rational homogeneous varieties which are equivariant compactifications of $\mathbb G_a^n$ are products of cominuscule varieties (also called irreducible Hermitian symmetric spaces of compact type, which are listed in Table~\ref{tab:ihss-list}). There is a broad class of equivariant compactifications of $\mathbb G_a^n$ through Euler-symmetric projective varieties (\cite[Section~5]{FHEuler}).
These results motivate the corresponding problem for 
noncommutative unipotent groups.

In this paper, we consider equivariant compactifications of Heisenberg groups, which will also be called Heisenberg equivariant compactifications. 
Recall that a Heisenberg group of dimension $2m+1$ is the connected unipotent group whose Lie algebra $\fh$ is given by $\fh=V\oplus\C z$, where $V$ is a vector space of dimension $2m$ endowed with a nondegenerate alternating form $\omega$, with the following Lie brackets:
\[
\forall v, w \in V, \quad 
  [v,w]=\omega(v,w)z,\qquad [\fh,z]=0.
\]
We allow $m=0$; in this case the one-dimensional Heisenberg group is, by convention, the vector group $\mathbb G_a$.
Equivalently, after fixing the form $\omega$ on $V$, the Heisenberg group $H$ is the variety $V\times\C$ with multiplication
\[
 (u,t) \cdot (v,s)=
 \left(u+v,t+s+\frac12\omega(u,v)\right).
\]

A fundamental family of examples of Heisenberg equivariant compactifications is given by adjoint varieties. If $\fg$ is a simple Lie algebra of rank at least $2$, its highest-root grading is a contact grading
\[
 \fg=\fg_{-2}\oplus\fg_{-1}\oplus\fg_0\oplus\fg_1\oplus\fg_2,
 \qquad \dim\fg_{-2}=1.
\]
The negative part $\fg_{-}:=\fg_{-2} \oplus \fg_{-1}$ is a Heisenberg algebra, and its Lie group is a Heisenberg group which acts simply transitively on the open Bruhat cell of the adjoint variety in $\mathbb{P} \fg$, which makes the adjoint variety a Heisenberg equivariant compactification. 
The type-$A_1$ adjoint variety is $\PP^1$, which is the unique equivariant compactification of the 1-dimensional Heisenberg group $\mathbb{G}_a$.
A further example of Heisenberg equivariant compactifications is the three-dimensional quadric $Q^3$ (cf. Proposition \ref{prop:quadrics}). 

Our main theorem provides a complete classification of Heisenberg equivariant compactifications of $G/P$ as follows:

\begin{theorem}\label{thm:main}
Let $X=G/P$ be a  projective rational homogeneous variety of dimension $2m+1$. Then $X$ is a Heisenberg equivariant compactification if and only if one of the following holds:

(i) $X$ is isomorphic to an adjoint variety or $X \simeq Q^3$;

(ii) $X \simeq \bP^{2m+1-d} \times Y$, where $Y$ is a product of cominuscule varieties and   $1\leq d=\dim Y \leq m$.
\end{theorem}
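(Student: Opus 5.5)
The plan is to reduce the geometric statement to a Lie-algebraic one. An action of a unipotent group $H$ on $X=G/P$ factors, after passing to $\Aut^0(X)$, through a unipotent subgroup of the connected automorphism group $\tilde G=\Aut^0(X)$, which is semisimple. By Borel's fixed point theorem we may conjugate $H$ into the unipotent radical of a Borel subgroup. The condition that $H$ has an open orbit with trivial generic stabilizer then becomes: $\fh\subset\Lie \tilde G$ is a nilpotent subalgebra with $\dim\fh=\dim X$ and $\fh\cap \mathrm{Ad}(g)\fp=0$ for some $g$. Equivalently, $\fh$ is a complement to a conjugate of $\fp$. Thus the statement becomes the classification of pairs $(\tilde\fg,\fp)$ for which some parabolic $\fp$ admits a Heisenberg complement.

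Next I would treat products. Write $X=\prod X_i$ with $X_i=G_i/P_i$ and $G_i$ simple, so that $\Lie\Aut^0(X)=\bigoplus \fg_i$ (apart from the usual exceptions, such as $\PP^{2n-1}$ versus $C_n/P_1$, which are handled by replacing $G_i$ with its full automorphism group). The projection of $\fh$ to each $\fg_i$ is a nilpotent algebra $\fh_i$ acting on $X_i$ with an open orbit, so $\dim \fh_i\ge \dim X_i$. Quotients of the Heisenberg algebra are either abelian or Heisenberg, since the only nonzero ideals are those containing the center. Counting dimensions, $\fh\hookrightarrow\bigoplus\fh_i$ with $\sum\dim\fh_i\ge\dim\fh$. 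I would argue that at most one factor receives the center nontrivially. That factor either is acted on generically freely by a Heisenberg quotient, or the center acts trivially on all factors but one. For the remaining factors the images are abelian of dimension at least $\dim X_i$, so by Arzhantsev's theorem those $X_i$ must be cominuscule. For the distinguished factor $X_1$, I would show that $\fh_1$ is either a Heisenberg algebra of dimension $\dim X_1$, or $X_1\simeq\PP^{n}$ with $\fh_1$ a larger nilpotent algebra acting with a generic stabilizer. The second case arises because $\PP^n$ has a large unipotent automorphism group. For the converse construction, take $X=\PP^{2m+1-d}\times Y$ and choose an abelian $\mathbb G_a^d$ acting simply transitively on the open cell of $Y$. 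Let the Heisenberg group act on $\PP^{2m+1-d}$ through a quotient-free embedding into $\mathbb G_a^{2m+1-d}\rtimes\mathrm{GL}$, twisted by a homomorphism to $\mathbb G_a^d$. The inequality $d\le m$ comes precisely from requiring the abelian part to absorb an isotropic-type complement, namely a Lagrangian $d$-dimensional subspace of $V$ that maps onto the $Y$-directions.

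The main obstacle is the single-factor case: classifying simple $\fg$ and parabolic $\fp$ with a Heisenberg complement. My first approach would be to use a grading. Choose a torus normalizing $\fh$, or degenerate $\fh$ via a one-parameter subgroup into $\fn^-$, and compare with the contact grading. If $\fh$ is graded compatibly with the grading defined by $\fp$, then $\fh\simeq\fg_-$ as graded algebras. In that case $\fg_-$ has a one-dimensional center and is two-step, which forces the grading to be of depth at most $2$ with $\dim\fg_{-2}=1$ (the adjoint case) or of depth $1$ with $\fg_-$ abelian (so $m=0$ and $X=\PP^1$). Otherwise I would bound $\dim X$ against the maximal dimension of a Heisenberg subalgebra of $\fg$ that intersects a parabolic trivially. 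The non-graded examples, namely $Q^3=B_2/P_1$ (handled in Proposition~\ref{prop:quadrics}) and $\PP^{2m+1}$, should then appear as the only exceptions. I would verify this through rank and dimension bounds, checking the small ranks by hand.
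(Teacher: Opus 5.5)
Your proposal has genuine gaps at the two places where the content of the theorem lies. At both you write ``I would show'', and the tools you suggest would not do the job.

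\textbf{The simple case.} Your degeneration idea is sound, but you use it only when $\fh$ happens to be graded. In fact it applies to \emph{every} complement $\fh$ of $\fp$. Filtering $\fh$ by $\fh\cap\bigoplus_{j\ge r}\fg_j$ (equivalently, taking the limit of $\fh$ under the one-parameter subgroup of the grading element) gives $\operatorname{gr}\fh\simeq\fg_-$. Hence $\dim[\fg_-,\fg_-]\le\dim[\fh,\fh]=1$, and Yamaguchi's result leaves only adjoint and cominuscule $G/P$. There is no separate ``non-graded'' regime.

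What remains is to decide which odd-dimensional cominuscule varieties admit a Heisenberg complement, and your proposal does not address this. ``Rank and dimension bounds, with small ranks checked by hand'' cannot settle it. The families $Q^{2k+1}$, $\Gr(k,N)$ with $k,N-k$ odd, $\LG(r,2r)$ and $\mathbb S_r$ are infinite. For them, and for $E_7/P_7$, the maximal nilpotent subalgebra $\fu\supset\fh$ is far larger than $\dim X$, so no count excludes a Heisenberg subalgebra of $\fu$ transversal to $\fp$. Bounding ``the maximal dimension of a Heisenberg subalgebra meeting a parabolic trivially'' just restates the problem.

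The missing idea is the graph description. With $\fh\subset\fu=\fn\ltimes W$ and $W=\fg_{-1}$, $\fh$ is the graph of some $f:W\to\fn$. The Heisenberg bracket then becomes $f(x)y-f(y)x=\omega(x,y)z$ with $\ker\omega=\C z$. Since $\fn$ preserves the cone of highest weight vectors, for $x,y$ on that cone the two terms on the left are tangent to it at $y$ and at $x$. Projecting away from these tangent directions yields a contradiction except for $\PP^{2m+1}$ and $Q^3$. For $E_7/P_7$ one pairs with the cubic norm instead, and for quadrics one uses a Koszul-type identity plus nilpotency of $f(z)$.

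\textbf{The product case.} Every nonzero ideal of $\fh$ contains $z$, so a factor on which $z$ acts nontrivially receives all of $\fh$. Your first alternative ($\fh_1$ Heisenberg acting freely on $X_1$) would then force $X=X_1$. So for $s\ge2$ everything hinges on showing that a factor on which $\fh$ acts faithfully, with a \emph{nonzero} stabilizer $\mathfrak s$, must be a projective space. For this you offer only the heuristic that $\PP^n$ has many unipotent automorphisms. The paper needs three ingredients here.
\begin{itemize}
\item[(a)] One has $\mathfrak s\cap\C z=0$, so $\mathfrak s$ is abelian with $\omega$-isotropic image and $\dim\mathfrak s\le m$. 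This gives both uniqueness of the faithful factor (your unproved claim that only one factor receives the center) and the necessity of $d\le m$.
\item[(b)] The contraction inequality, applied to the whole product, makes every factor cominuscule or adjoint, with at most one adjoint factor. A non-projective adjoint faithful factor is then excluded by the contact lemma: a Heisenberg group acting with an open orbit and preserving a contact structure has trivial infinitesimal stabilizer.
\item[(c)] $\mathfrak s$ acts on $\fh/\mathfrak s\simeq T_oX_{i_0}$ by nilpotent operators with image in $\C\bar z$, i.e.\ of rank at most one. In the isotropy representation of a non-projective cominuscule variety, every nonzero nilpotent element has rank at least two.
\end{itemize}

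Your converse construction is only gestured at, and the $d$-dimensional stabilizer subspace should be isotropic, not Lagrangian. That part is easily repaired by an explicit faithful representation of $\fh$ on $\C^{2m+2-d}$, as in the paper.
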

Note that $\PP^{2m+1}$ is cominuscule for its full connected automorphism
group $\operatorname{PGL}_{2m+2}$, but it is the adjoint variety of
$\operatorname{Sp}_{2m+2}$, so the group defining an adjoint presentation of $X$ need not to be ${\rm Aut}^0(X).$

\begin{example} 
\label{ex:product}
The following example illustrates the product case.
Let $H$ be the 3-dimensional Heisenberg group represented by matrices
\[
 h(a,b,c)=
 \begin{pmatrix}
 1&a&c\\
 0&1&b\\
 0&0&1
 \end{pmatrix}.
\]
Let $H$ act on $\PP^1$ by
$[s:t]\mapsto[s+at:t]$, and on $\PP^2$ through matrix multiplications on column vectors. For
$x=\bigl([0:1],[0:0:1]\bigr)\in\PP^1\times\PP^2,$
the stabilizer of the first component is $\{a=0\}$, while the
stabilizer of the second is $\{b=c=0\}$. Their intersection is
trivial. Hence the  orbit $H\cdot x$ is open and
isomorphic to $H$.
\end{example}

Motivated by the uniqueness results for equivariant compactifications of $\mathbb G_a^n$ by Fano manifolds of Picard number one
(\cite[Theorem~1.2 and Corollary~1.3]{FuHwang}), Cheong proved in \cite[Theorem~1.1]{Cheong} that the compactification of the unipotent radical of a parabolic by the associated flag variety is unique for most projective rational homogeneous varieties, which in particular implies that the Heisenberg equivariant compactification structure is unique on adjoint varieties that are not projective spaces.
By contrast, Ding and Luo showed in \cite[Theorem~3.12]{DingLuo} that
$\bP^{2m+1} (m \geq 1)$ carries infinitely many inequivalent
Heisenberg compactification structures. 
We can prove that the Heisenberg equivariant compactification structure is unique on $Q^3$, but it is more subtle for the case of products. We leave this question to the reader.

In parallel to Euler-symmetric varieties (\cite{FHEuler}), the notion of Heisenberg-symmetric varieties is introduced in \cite{FuHwangContact} through contact fundamental forms, which provides many (possibly singular) Heisenberg equivariant compactifications. It remains a very interesting problem to find (or even classify) more examples of smooth Heisenberg equivariant compactifications of Picard number one.

Here is the outline of the proof of Theorem \ref{thm:main}. 
For $X=G/P$, we may assume $G = \Aut^0(X)$ up to replacing $G/P$ by another representative for $X$. We first deal with the case $G$ simple.
If $X$ is an equivariant compactification of a Heisenberg group $H$, then $H$ is a subgroup of $G$, whose Lie algebra gives a complement to $\fp =\Lie(P)$. If $\fg_{-}$ is the negative part of the fundamental grading of $\fg$ associated to $\fp$, then we prove that $\dim [\fg_{-}, \fg_{-}] \leq \dim [\fh, \fh]=1$. This implies that $G/P$ is either cominuscule or adjoint by a result of Yamaguchi (\cite{Ya}). It remains to classify which cominuscule varieties are Heisenberg equivariant compactifications.
As $H$ is unipotent, it is contained in the unipotent radical $U$ of a Borel subgroup up to a conjugation, which gives $\fh \subset \fu=\Lie(U) = \fn \ltimes W$, 
where $\fn=\fu \cap \fp$ and $W = \fg_{-1}$.
As $\fh$ is a complement to $\fp$, the inclusion $\fh \subset \fu $ is in fact given by a linear map $f: W \to \fn$. The Heisenberg bracket on $\fh$ then gives strong restrictions on $f$, which we call graph equations. We go through the list of cominuscule varieties by studying these restrictions to prove that only $Q^3$ or some coincidences of cominuscule/adjoint varieties can appear.

For general $G$, we can decompose $X=G/P =\prod_{i} G_i/P_i$ into a product with each $G_i$ simple, and we can assume there are at least two factors. We will show that there exists a unique factor $X_{i_0}$ on which the Heisenberg group $H$ acts faithfully and the other factors are all cominuscule varieties. Then a detailed analysis on the isotropy representation shows that $X_{i_0}$ is a projective space.

\medskip

The article is organized as follows. Section~2 proves the adjoint--cominuscule reduction. Section~3 establishes the graph description and proves the cominuscule classification case by case, which proves Theorem \ref{thm:main} in the case where $G$ is simple. 
Section~4 considers the product case and proves Theorem \ref{thm:main}.

\medskip

{\em Acknowledgements:} We would like to thank Jun-Muk Hwang for suggesting the problem and for helpful discussions. Based on a manuscript dated in 2025 proving Theorem \ref{thm:main} for $G$ simple via a more computational method, the authors used ChatGPT this July for exploratory discussions and language editing which led to the current version. All mathematical arguments and references were independently verified by the authors, who take full responsibility for the content. C. Ding was supported by a start-up funding from Shenzhen University and the Shenzhen Peacock Plan. B. Fu was supported by the National Key Research and Development Program of China (No. 2025YFA1017302). Z. Luo was supported by the Institute for Basic Science (Grant No. IBS-R032-D1-2026-a00). 

\section{Reduction to cominuscule varieties}

In this section $\fg$ is a complex simple Lie algebra and
$\fp\subsetneq\fg$ is a parabolic subalgebra. Fix a Cartan subalgebra
and a system $\Delta$ of simple roots such that $\fp$ is determined by
a nonempty subset $I\subset\Delta$. For a root
$\beta=\sum_i n_i\alpha_i$, put
$\deg_I(\beta)=\sum_{\alpha_i\in I}n_i$. For $j\ne0$, let
$\fg_j$ be the sum of the root spaces $\fg_\beta$ with
$\deg_I(\beta)=j$, and let $\fg_0$ be the sum of the Cartan subalgebra
and the root spaces of degree zero. Then $\fp=\bigoplus_{j\geq0}\fg_j$ and
$\fg=\bigoplus_{j=-k}^{k}\fg_j$ is called the fundamental grading of $\fg$ associated to $\fp$, where $\fg_k\ne0$. The integer $k$ is called
the depth of the grading, and we write
$\fg_-:=\bigoplus_{j<0}\fg_j$.


\begin{lemma}\label{lem:filtered-complement}
 If a Lie subalgebra
$\fl\subset\fg$ is a vector-space complement of $\fp$, then
$\fl$ has a finite filtration such that
  $\operatorname{gr}\fl\simeq\fg_-$
as graded Lie algebras. Moreover, 
\begin{equation}\label{eq:filtered-derived-bound}
  \dim[\fg_-,\fg_-]\le \dim[\fl,\fl].
\end{equation}
\end{lemma}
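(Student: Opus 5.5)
Define the filtration on $\fl$ by intersecting with the decreasing filtration of $\fg$ given by the grading. Put
\[
F^j\fg:=\bigoplus_{i\ge j}\fg_i,\qquad j=-k,\dots,k,
\]
so that $F^0\fg=\fp$, $F^{-k}\fg=\fg$, and $[F^i\fg,F^j\fg]\subset F^{i+j}\fg$. Then set $F^j\fl:=\fl\cap F^j\fg$. This is a finite decreasing filtration of $\fl$ compatible with the bracket, with $F^{-k}\fl=\fl$ and $F^0\fl=\fl\cap\fp=0$. Consequently $\operatorname{gr}\fl=\bigoplus_{j<0}F^j\fl/F^{j+1}\fl$ is a graded Lie algebra concentrated in negative degrees.

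To identify $\operatorname{gr}\fl$, use the map $F^j\fl/F^{j+1}\fl\to F^j\fg/F^{j+1}\fg\simeq\fg_j$ induced by inclusion. It is injective because $F^{j+1}\fl=\fl\cap F^{j+1}\fg$, and the sum over $j$ of these maps is a graded Lie algebra homomorphism $\operatorname{gr}\fl\to\fg_-$. It remains to check surjectivity, which is the one point needing care. Since $\fg=\fl\oplus\fp$, we have $\dim\fl=\dim\fg_-$, hence $\dim\operatorname{gr}\fl=\dim\fl=\dim\fg_-$. A graded map that is injective in each degree, between spaces of equal total dimension, is an isomorphism. This proves $\operatorname{gr}\fl\simeq\fg_-$ as graded Lie algebras.

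For the inequality \eqref{eq:filtered-derived-bound}, compare $[\fl,\fl]$ with the induced filtration $F^j[\fl,\fl]:=[\fl,\fl]\cap F^j\fl$. Its associated graded is a graded subspace of $\operatorname{gr}\fl$ with $\dim\operatorname{gr}[\fl,\fl]=\dim[\fl,\fl]$. It contains $[\operatorname{gr}\fl,\operatorname{gr}\fl]$: the bracket of the symbols of $x\in F^i\fl$ and $y\in F^j\fl$ is the class of $[x,y]\in F^{i+j}\fl$ modulo $F^{i+j+1}\fl$. Since $[x,y]\in[\fl,\fl]$, this class is the symbol of an element of $F^{i+j}[\fl,\fl]$. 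Sums in a fixed degree are handled in the same way. Therefore
\[
\dim[\fg_-,\fg_-]=\dim[\operatorname{gr}\fl,\operatorname{gr}\fl]\le\dim\operatorname{gr}[\fl,\fl]=\dim[\fl,\fl].
\]

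I expect no serious obstacle. The only points to watch are the surjectivity argument, which is a dimension count using the complement hypothesis, and the fact that the bracket of symbols lies in the graded space of $[\fl,\fl]$ with respect to the induced filtration.
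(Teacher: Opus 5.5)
Your proof is correct and follows essentially the paper's argument: the same filtration $F^j\fl=\fl\cap F^j\fg$, and the same comparison of $\operatorname{gr}[\fl,\fl]$ with $[\operatorname{gr}\fl,\operatorname{gr}\fl]$ to get the inequality. The only difference is how the graded pieces are identified. You obtain $F^j\fl/F^{j+1}\fl\simeq\fg_j$ from injectivity plus a dimension count, and the bracket compatibility from functoriality of $\operatorname{gr}$. The paper instead writes $\fl$ as the graph of $\phi:\fg_-\to\fp$, reads off the graded pieces directly, and compares brackets on lowest-degree components.
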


\begin{proof}
For every integer $r$, put $\fg^r=\bigoplus_{j\ge r}\fg_j$
with the evident extensions outside the range $[-k,k]$, and filter
$\fl$ by $F^r\fl=\fl\cap\fg^r$. Projection along $\fp$
identifies $\fl$ linearly with $\fg/\fp \simeq \fg_-$, so $\fl$ is the graph of a linear map $\phi:\fg_-\to\fp$. For $-k\le r\le-1$, we have
\[
 F^r\fl=
 \left\{u+\phi(u):u\in\bigoplus_{j=r}^{-1}\fg_j\right\}.
\]
Taking the component of degree $r$ induces
\begin{equation}\label{eq:filtered-graded-piece}
  F^r\fl/F^{r+1}\fl\simeq\fg_r.
\end{equation}
The filtration is compatible with the Lie bracket because
$[\fg^r,\fg^s]\subset\fg^{r+s}$ for all $r,s$.
If two filtered elements have the lowest components
$u_r\in\fg_r$ and $v_s\in\fg_s$, then the component of degree
$r+s$ of their bracket is $[u_r,v_s]$. Every term involving
$\phi$, or a higher negative component, has a strictly larger degree.
Thus \eqref{eq:filtered-graded-piece} identifies the associated graded bracket with the bracket on $\fg_-$.

Let $D=[\fl,\fl]$ with its induced filtration. The associated graded space
$\operatorname{gr}D$ is a graded subspace of
$\operatorname{gr}\fl$, and every bracket of two symbols in
$\operatorname{gr}\fl$ is the symbol of an element of $ \operatorname{gr}D$.
It then follows that $ [\operatorname{gr}\fl,\operatorname{gr}\fl]
 \subseteq\operatorname{gr}D.$
Taking dimensions proves \eqref{eq:filtered-derived-bound}.
\end{proof}





\begin{lemma}\label{lem:parabolic-dichotomy}
If $\dim[\fg_-,\fg_-]\le1,$ then one of the following holds:
\begin{enumerate}
\item $\fg_-$ is abelian and
      $G/P$ is a cominuscule variety;
\item $[\fg_-,\fg_-]$ is one-dimensional and $G/P$ is the
      adjoint variety.
\end{enumerate}
\end{lemma}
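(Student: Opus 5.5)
The plan is to use the fact that $\fg_-$ is generated by $\fg_{-1}$ and to read off the structure of $I$ from the condition $\dim[\fg_-,\fg_-]\le 1$. Two standard facts about the fundamental grading will be used throughout. First, $[\fg_{-1},\fg_{j}]=\fg_{j-1}$ for all $j\le -1$, so $\fg_-$ is generated by $\fg_{-1}$. Second, each $\fg_j$ with $j\neq 0$ is a $\fg_0$-module, and in particular it is stable under the Cartan subalgebra. Consequently $[\fg_-,\fg_-]=\bigoplus_{j\le-2}\fg_j$. So the hypothesis says that either the depth is $k=1$, or $k=2$ with $\dim\fg_{-2}=1$. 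Indeed, $k\ge3$ would force $\fg_{-2}$ and $\fg_{-3}$ to be nonzero at the same time, giving dimension at least $2$.

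In the case $k=1$, we have $\fg_-=\fg_{-1}$, so $\fg_-$ is abelian. The highest root $\theta$ then satisfies $\deg_I\theta=1$. Hence $I=\{\alpha_i\}$ consists of a single simple root whose coefficient in $\theta$ equals $1$. By definition (equivalently, by the standard characterization), $G/P$ is then cominuscule. I would note in passing that if $I$ contained two simple roots, then $\deg_I\theta\ge2$, since every simple root appears in $\theta$ with positive coefficient. This is the case-(1) conclusion.

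In the case $k=2$ with $\dim\fg_{-2}=1$, the space $\fg_{-2}$ is a single root space $\fg_{-\beta}$ that is stable under $\fg_0$. In particular, $[\fg_0\cap\fg_{\alpha},\fg_{-\beta}]=0$ for every positive root $\alpha$ of degree $0$. Also $\fg_2=\fg_\beta$ by the symmetry $\dim\fg_j=\dim\fg_{-j}$. Because $k=2$, the top-degree piece $\fg_2$ contains the highest root space, so $\beta=\theta$. Since $\deg_I\theta=2$, the set $I$ is either one simple root with coefficient $2$ in $\theta$, or two simple roots with coefficient $1$.

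Next I will show that $\fg_1=\{x:[x,\fg_{-2}]\subset\fg_{-1}\}$ pairs as follows: $\fg_{-1}\oplus\fg_{-2}$ is a Heisenberg algebra, and $\fp$ equals the stabilizer of the line $\fg_\theta$. The cleanest route is to identify $\fp$ directly with the parabolic $\fp_\theta$ associated with $\theta$, namely the sum of the nonnegative eigenspaces of $\operatorname{ad}h_\theta$. The main check is that the $\deg_I$-grading coincides with the grading by $\theta^\vee$, i.e.\ that $\deg_I(\gamma)=\langle\gamma,\theta^\vee\rangle$ for all roots $\gamma$.

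To prove this, use the following. The roots of degree $2$ form exactly $\{\theta\}$, and $\fg_0$ kills $\fg_\theta$ modulo the Cartan subalgebra. Hence every simple root $\alpha\notin I$ satisfies $\theta-\alpha\notin\Phi$ and $\theta+\alpha\notin\Phi$, so $\langle\theta,\alpha^\vee\rangle=0$. Then I compare $\dim\fg_{-2}=1$ with the known $\theta^\vee$-grading, whose degree-$\pm2$ part is exactly $\fg_{\pm\theta}$. The simple roots not orthogonal to $\theta$ are precisely those of the adjoint parabolic: one root, or two roots $\alpha_1,\alpha_n$ in type $A$. This shows that $I$ must be that set. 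For the converse, if $\alpha\in I$ were orthogonal to $\theta$, then $s_\alpha$ would fix $\theta$. One should then produce a second degree-2 root (for instance $\theta-\alpha-\cdots$), obtained via a root string through the neighbors of $\alpha$.

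I expect this last comparison, showing that $I$ equals the adjoint set, to be the main obstacle. I would first try the slick argument that $\fg_0$ is reductive with $\fg_{-2}$ a one-dimensional $\fg_0$-module, so that the semisimple part of $\fg_0$ centralizes $\theta$. This gives $I\supseteq\{\alpha:\langle\theta,\alpha^\vee\rangle\neq0\}$. Equality then follows because $\deg_I\theta=2=\langle\theta,\theta^\vee\rangle$ and both sides are sums of positive coefficients. If this argument fails, I would fall back on checking the Dynkin diagrams (or Yamaguchi's classification \cite{Ya}). Once $I$ is identified, $G/P\subset\PP\fg$ is the orbit of $[\fg_\theta]$, i.e.\ the adjoint variety, which gives case (2).
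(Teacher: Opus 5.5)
Your proposal is correct, and in case (2) it takes a different route from the paper. Once the paper has reduced to depth two with $\dim\fg_{-2}=1$, it cites Yamaguchi's theorem that a contact grading of a simple Lie algebra is unique up to conjugacy and equals the highest-root grading. You instead identify $I$ directly from root data, in three steps:
\begin{enumerate}
\item You show $\fg_{\pm2}=\fg_{\pm\theta}$.
\item The derived algebra $[\fg_0,\fg_0]$ acts trivially on the line $\fg_\theta$. So $\langle\theta,\alpha^\vee\rangle=0$ for every simple $\alpha\notin I$, which says $J:=\{\alpha_i:(\alpha_i,\theta)\neq0\}\subseteq I$.
\item The count $\sum_{i\in I}n_i=\deg_I\theta=2=\langle\theta,\theta^\vee\rangle=\sum_{i\in J}n_i\langle\alpha_i,\theta^\vee\rangle$ then forces $I=J$.
\end{enumerate}

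For step 3 to hold, state the fact it uses silently: $\langle\alpha_i,\theta^\vee\rangle\in\{0,1\}$ for every simple $\alpha_i$. This holds because $\theta$ is long, dominant, and different from $\alpha_i$; rank one is excluded, since there the depth is one. Without this bound, ``sums of positive coefficients'' does not force equality. With $I=J$ you also get $\deg_I=\langle\cdot,\theta^\vee\rangle$. A one-line check then shows that $\fp_J$ is the stabilizer of $[\fg_\theta]$, so $G/P$ is the adjoint variety.

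Some passages should be dropped, because they are vague or only give the inclusion $J\subseteq I$, and the counting argument supersedes them:
\begin{itemize}
\item the garbled sentence about $\fg_1=\{x:[x,\fg_{-2}]\subset\fg_{-1}\}$;
\item the paragraph on producing a second degree-two root via root strings through neighbours of $\alpha$;
\item the claim that comparing with the $\theta^\vee$-grading ``shows that $I$ must be that set''.
\end{itemize}

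As for what each route buys: yours is self-contained and classification-free. It also explains why $I$ is either a single root of coefficient two or two roots of coefficient one. The paper's route is a one-line citation carrying the stronger uniqueness-up-to-conjugacy statement. That is more than this lemma needs, since the grading here is already in standard position.
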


\begin{proof}
It is well-known (see for example \cite[Lemma 3.8]{Ya}) that
the negative part $\fg_-$ is generated as a Lie algebra by
$\fg_{-1}$, and $[\fg_-,\fg_-]=\bigoplus_{j\le-2}\fg_j$.

If $[\fg_-,\fg_-]$ is zero, the grading has depth one. Hence, the 
subset $I$ consists of a single simple root having coefficient one in the
highest root. It follows that $\fp$ is maximal cominuscule.

Suppose that $\dim [\fg_-,\fg_-]=1$. Then the grading has depth two and
$\dim\fg_{-2}=1$; thus, it is a contact grading. By
\cite[Theorem~4.1]{Ya}, the contact grading of a complex simple Lie
algebra is unique up to conjugacy and is the grading determined by the stabilizer of the highest-root line in $\PP \fg$. Hence $G/P$ is the
adjoint variety.
\end{proof}

\begin{proposition}
\label{prop:reduction}
If $X=G/P$ is a Heisenberg equivariant compactification with $G$ simple, then   $X$ is
either an adjoint variety or a cominuscule variety.
\end{proposition}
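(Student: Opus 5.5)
The plan is to combine Lemma~\ref{lem:filtered-complement} with Lemma~\ref{lem:parabolic-dichotomy}. The only real work is to produce, from the compactification structure, a Lie subalgebra $\fl\subset\fg$ that is a vector-space complement of $\fp$ and is isomorphic to a Heisenberg algebra $\fh$.

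\emph{Step 1: realize $H$ inside $G$.} Let $H$ act on $X=G/P$ with an open orbit $H\cdot x\simeq H$. Since $G$ is simple, we may take $G=\Aut^0(X)$, replacing $G/P$ by another presentation if necessary. A presentation with $G$ simple differs from the $\Aut^0$ one only in the known exceptional coincidences, such as $\PP^{2m+1}$ as an $\operatorname{Sp}$-quotient. The variety $X$ itself stays the same, so the conclusion is unaffected, provided we check it for $\Aut^0(X)$ simple. The action of the connected group $H$ gives a homomorphism $H\to\Aut^0(X)=G$. It is injective, because the action is free on the open orbit.

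\emph{Step 2: obtain the complement.} Replace $P$ by the stabilizer $G_x$, which is conjugate to it, and let $\fl=\Lie(H)\subset\fg$. The orbit map $H\to X$, $h\mapsto h\cdot x$, is an open immersion, so its differential at the identity, $\fl\to \fg/\fg_x=T_xX$, is an isomorphism. Hence $\fl\cap\fg_x=0$ and $\dim\fl=\dim X=\dim\fg-\dim\fg_x$, which means $\fl$ is a vector-space complement of the parabolic $\fg_x$. Conjugating back by an element of $G$ gives a complement of $\fp$ itself.

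\emph{Step 3: conclude.} By \eqref{eq:filtered-derived-bound},
\[
\dim[\fg_-,\fg_-]\le\dim[\fl,\fl]=\dim[\fh,\fh]\le 1.
\]
The last inequality holds because $[\fh,\fh]=\C z$ for $m\ge1$ and $[\fh,\fh]=0$ for $m=0$. Lemma~\ref{lem:parabolic-dichotomy} then says that $G/P$ is cominuscule or adjoint.

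The main subtle point is Step 1: we need $H$ to sit inside the group $G$ used to present $X$. If $X$ is given as $G/P$ with $G$ simple, then $\Aut^0(X)$ is also simple, apart from the standard exceptions. In those exceptional cases (for example $\PP^{2m+1}$ and the $B_n/C_n$ or $G_2$ coincidences) $X$ is already cominuscule under $\Aut^0(X)$ or is the adjoint variety. So it suffices to run the argument with $G=\Aut^0(X)$, where the injection $H\hookrightarrow G$ is automatic. Steps 2 and 3 are then routine.
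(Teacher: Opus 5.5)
Your proposal is correct and follows the paper's proof essentially step for step. You pass to $G=\Aut^0(X)$, embed $H$ by faithfulness, use the orbit map at a point of the open orbit to get $\fg=\fh\oplus\fg_x$, and then combine Lemma~\ref{lem:filtered-complement} with Lemma~\ref{lem:parabolic-dichotomy}. Two small remarks: $\Aut^0(X)$ is simple in every case, including the three Demazure exceptions, so your extra case discussion is harmless but unnecessary; and writing $\dim[\fh,\fh]\le 1$ to cover $m=0$ is a slight improvement on the paper's ``$=1$''.
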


\begin{proof}
 By the Demazure--Onishchik
theorem~\cite{Demazure,Onishchik}, we may assume that $X = G/P$ satisfies $G=\Aut^0(X)$ up to taking a different representative of $X$. As $H$ acts on $X$ faithfully, $H$ can be regarded as a subgroup of $G$.
Choose $x$ in the open $H$-orbit and replace $P$ by the conjugate stabilizer $G_x$. The differential of the orbit map is a surjection
$ \fh\longrightarrow T_xX\simeq\fg/\fp,$ which is then an
isomorphism because $\dim H = \dim X$. Thus we obtain
$ \fg=\fh\oplus\fp$.
Applying Lemma~\ref{lem:filtered-complement} to the fundamental grading of $\fp$,  we obtain $\dim[\fg_-,\fg_-]\le\dim[\fh,\fh]=1.$
Now we can apply Lemma~\ref{lem:parabolic-dichotomy} to conclude.
\end{proof}


\section{The classification for simple $G$}

This section is devoted to the proof of Theorem \ref{thm:main} in the case where $G$ is simple. By Proposition \ref{prop:reduction}, we may assume that $X=G/P$ is a cominuscule variety with $G=\Aut^0(X)$ and $X$ is an equivariant compactification of a Heisenberg group
$H$.

\subsection{Graph equations} \label{sect:graph}

We start with the following result.

\begin{lemma}\label{lem:standard-position}
Let $K$ be a connected reductive group, let $Q\subset K$ be a
parabolic subgroup, and let $N\subset K$ be a connected unipotent
subgroup having an open orbit on $K/Q$. After conjugating $N$ in $K$,
there exist a Borel subgroup $B=TU$ and a $T$-fixed point
$o\in K/Q$ such that $N\subset U$ and $o$ belongs to the open
$N$-orbit.
\end{lemma}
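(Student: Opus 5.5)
Since $N$ is connected unipotent, hence connected solvable, the Borel fixed point theorem (equivalently, the Lie--Kolchin theorem) puts $N$ inside some Borel subgroup of $K$. The unipotent part of a Borel subgroup is its unipotent radical. So after conjugating $N$ we may fix a Borel subgroup $B=TU$ with $N\subset U$. The remaining task is to find a $T$-fixed point of $K/Q$ lying in the open $N$-orbit. We may then conjugate further only by elements normalizing both $U$ and $N$, or we may move the point instead.

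Let $\Omega\subset K/Q$ be the open $N$-orbit. By the Bruhat decomposition, $K/Q$ is the finite disjoint union of the $B$-orbits $B\dot wQ/Q$. Each of these contains exactly one $T$-fixed point, namely $\dot wQ/Q$ for $w$ in $W/W_Q$. The open one is the big cell $B\dot w_0 Q/Q=U\dot w_0Q/Q$. Since $\Omega$ is dense and open, it meets the big cell, so we pick $y\in\Omega\cap U\dot w_0Q/Q$ and write $y=u\cdot o$, where $o=\dot w_0Q/Q$ is $T$-fixed and $u\in U$.

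Now conjugate by $u^{-1}$. Set $N'=u^{-1}Nu$, which is still contained in $U$ because $u\in U$. Its open orbit is $u^{-1}\Omega$, and this contains $u^{-1}y=o$. Thus $N'\subset U$, $o$ is $T$-fixed, and $o$ lies in the open $N'$-orbit, which is the claim after replacing $N$ by its conjugate $N'$.

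The only mildly delicate point is that the $B$-stable open cell is a single $U$-orbit through a $T$-fixed point. This holds because $B\dot w_0Q/Q=U\,T\dot w_0 Q/Q=U\dot w_0Q/Q$, since $\dot w_0^{-1}T\dot w_0=T\subset Q$. Everything else is formal, so I do not expect a real obstacle; the key step is choosing the point in $\Omega\cap(\text{big cell})$ and conjugating by the $U$-component.
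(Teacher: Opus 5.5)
Your proof is correct and takes essentially the same route as the paper: put $N$ in the unipotent radical $U$ of a Borel subgroup $B=TU$, observe that the open $N$-orbit meets (indeed lies in) the big cell $U\cdot o$ with $o$ its unique $T$-fixed point, and conjugate by the element of $U$ that carries the chosen point to $o$. Your implicit normalization $B\subseteq Q$, i.e.\ replacing $Q$ by a conjugate via the $K$-equivariant isomorphism $K/Q\simeq K/Q'$, is harmless, and the paper glosses over it as well.
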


\begin{proof}
Choose a maximal unipotent subgroup $U$ containing $N$, and let
$B=N_K(U)=TU$ be the corresponding Borel subgroup. If $x$ belongs to
the open $N$-orbit, then $B\cdot x$ is open. By the Bruhat
decomposition, it is the unique open $B$-orbit in $K/Q$, it contains
a unique $T$-fixed point $o$, and it equals $U\cdot o$. Choose
$u\in U$ with $u\cdot x=o$. Since $u\in U$, we have
$uNu^{-1}\subset U$, and the open orbit of $uNu^{-1}$ contains $o$.
\end{proof}


By Lemma~\ref{lem:standard-position}, after conjugating $H$ we may
choose a Borel subgroup $B=TU$ and a $T$-fixed point $o\in X$ such
that $H\subset U$ and $o$ belongs to the open $H$-orbit

Let $\fu=\Lie(U)$ and $\fp=\Lie(G_o).$
Apply the grading convention of Section~2 directly to the
cominuscule parabolic $G_o$.  Since the grading has depth one, it takes the form
\begin{equation}\label{eq:cominuscule-grading}
 \fg=\fg_{-1}\oplus\fg_0\oplus\fg_1,
 \qquad
 \fp=\fg_0\oplus\fg_1.
\end{equation}
Let $W:=\fg_{-1}$ and  $ \fn:=\fu\cap\fg_0$.
Then $W$ is the abelian cominuscule module.
 

\begin{proposition}\label{prop:graph}
Under the above notations, we have $\fu=\fn\ltimes W$ and 
there is a unique linear map $f:W\to\fn$ such that
$\fh=\{f(x)+x:x\in W\}.$
Moreover, there exist $0\ne z\in W$ and
$\omega\in\Lambda^2W^*$ such that for all $x,y\in W$, the following graph equations hold:
\begin{align}
\ker\omega &=\C z, \label{eq:graph-radical} \\
 f(x)y-f(y)x&=\omega(x,y)z,
     \label{eq:graph-torsion}\\
 [f(x),f(y)]&=\omega(x,y)f(z).
     \label{eq:graph-closure}
\end{align}
Here $f(x)y:=[f(x),y]\in W$.
\end{proposition}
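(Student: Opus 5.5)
The plan is to exploit the $T$-stability of everything in sight to locate $\fp$ relative to $\fu$, and then to read off the graph equations from closure of $\fh$ under the bracket.

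\emph{Step 1: the decomposition $\fu=\fn\ltimes W$.} Since $o$ is $T$-fixed, $\fp=\Lie(G_o)$ contains $\Lie(T)$, so $\fp$ and the grading \eqref{eq:cominuscule-grading} are sums of $T$-root spaces. Since $o$ lies in the open $H$-orbit and $H\subset U$, the orbit $U\cdot o$ is open, which gives $\fu+\fp=\fg$. Both $\fu$ and $\fp$ are $T$-stable. Hence every root space $\fg_\beta\subset\fg_{-1}$, which is not contained in $\fp$, must be contained in $\fu$, and so $W=\fg_{-1}\subset\fu$. Because $\fu$ is spanned by the root spaces of positive roots for $B$, it cannot contain both $\fg_\beta$ and $\fg_{-\beta}$. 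Since $\fg_1$ consists exactly of the negatives of the roots in $\fg_{-1}$, we get $\fu\cap\fg_1=0$. Being $T$-stable, $\fu$ is the direct sum of its intersections with the graded pieces, so $\fu=(\fu\cap\fg_0)\oplus\fg_{-1}=\fn\oplus W$. Finally, $[\fg_0,\fg_{-1}]\subset\fg_{-1}$ and $[\fg_{-1},\fg_{-1}]\subset\fg_{-2}=0$, so $W$ is an abelian ideal of $\fu$ and $\fu=\fn\ltimes W$.

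\emph{Step 2: $\fh$ is a graph.} As in the proof of Proposition~\ref{prop:reduction}, $\fg=\fh\oplus\fp$. Since $\fh\subset\fu$ and $\fu\cap\fp=\fn$, the projection $\fu\to W$ along $\fn$ is the restriction of the projection $\fg\to\fg/\fp\simeq W$. This projection is therefore injective on $\fh$, and it is onto $W$ by a dimension count. Hence $\fh=\{f(x)+x\}$ for a unique linear map $f:W\to\fn$.

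\emph{Step 3: the graph equations.} Using $[x,y]=0$ and $[x,f(y)]=-f(y)x$, one computes
\[
[f(x)+x,\,f(y)+y]=[f(x),f(y)]+\bigl(f(x)y-f(y)x\bigr),
\]
where the first term lies in $\fn$ and the second in $W$. Because $\fh$ is a subalgebra, this bracket equals $f(w)+w$ for $w=f(x)y-f(y)x$. Comparing $\fn$-components then gives $[f(x),f(y)]=f(w)$.

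Now use the Heisenberg structure. The derived algebra $[\fh,\fh]$ equals the center and is spanned by some $\zeta=f(z)+z$, where $z\neq0$ because the projection to $W$ is injective on $\fh$. Write $[a,b]=\tilde\omega(a,b)\zeta$ on $\fh$, and let $\omega(x,y)=\tilde\omega(f(x)+x,f(y)+y)$. The radical of $\tilde\omega$ is the center $\C\zeta$, which gives \eqref{eq:graph-radical}. Comparing $W$-components of $[f(x)+x,f(y)+y]=\omega(x,y)(f(z)+z)$ gives \eqref{eq:graph-torsion}, and comparing $\fn$-components gives \eqref{eq:graph-closure}.

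The only delicate point is Step 1, namely showing that $\fu\cap\fg_1=0$ and that $W\subset\fu$. This relies on the $T$-stability of both $\fu$ and $\fp$, which is exactly what Lemma~\ref{lem:standard-position} was designed to provide.
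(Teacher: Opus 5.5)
Your proposal is correct and follows the paper's proof. The steps match: $\fu=\fn\ltimes W$, then $\fh$ as a graph via the projection $\fg\to\fg/\fp\simeq W$ restricted to $\fu$, then a comparison of the $\fn$- and $W$-components of $[f(x)+x,f(y)+y]=\omega(x,y)(f(z)+z)$; your Step 1 simply spells out, through $T$-weight spaces and $\fu+\fp=\fg$, what the paper asserts by saying that $G_o$ is opposite to the cominuscule parabolic containing $B$ (and in the degenerate case $m=0$ you should take $\zeta$ spanning the center, since then $[\fh,\fh]=0$).
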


\begin{proof}
First, let us prove that $\fu=\fn\ltimes W.$
Since $o$ belongs to the open $H$-orbit,
the differential of the orbit map at the identity gives an isomorphism $ \fh\xrightarrow{\ \sim\ }\fg/\fp.$
By \eqref{eq:cominuscule-grading}, $\fg/\fp\simeq\fg_{-1}=W.$
Because $G_o$ is opposite to the cominuscule parabolic containing
$B$, the root spaces of $\fu$ have grading degree either $0$ or
$-1$. Their degree-zero part is $\fn=\fu\cap\fg_0$, while their
degree-$-1$ part is all of $\fg_{-1}=W$. Since
$[\fg_0,\fg_{-1}]\subset\fg_{-1}$ and
$[\fg_{-1},\fg_{-1}]=0$, this proves $\fu=\fn\ltimes W.$

Under the identification $\fg/\fp\simeq W$, the quotient map
$\fu\to\fg/\fp$ is simply the projection $\fn\oplus W\longrightarrow W$.
Its restriction to $\fh$ is the isomorphism $\fh \simeq \fg/\fp$. Hence, $\fh$ is the graph of a unique linear map $f: W\to\fn$.

Transport the bracket of $\fh$ to $W$ by this projection. Since $\fh$ is Heisenberg, its central line becomes $\C z\subset W$, and the transported bracket has the form
\[
 [x,y]_{\fh}=\omega(x,y)z
\]
for an alternating form $\omega$ satisfying
\eqref{eq:graph-radical}. Since $W=\fg_{-1}$ is abelian,
\[
 [f(x)+x,f(y)+y]
 =[f(x),f(y)]+f(x)y-f(y)x.
\]
On the other hand, the graph description gives
\[
 [f(x)+x,f(y)+y]
 =\omega(x,y)\bigl(f(z)+z\bigr).
\]
Comparing the $W$- and $\fn$-components yields
\eqref{eq:graph-torsion} and \eqref{eq:graph-closure}, respectively.
\end{proof}


\subsection{Cominuscule varieties}
We now list the cominuscule varieties, their cominuscule modules $W=\fg_{-1}$, and
their semisimple Levi algebras $\fl=[\fg_0,\fg_0]$ in
Table~\ref{tab:ihss-list} (see
\cite[Section~3.1]{LandsbergManivelRHV}). Here
$\Delta_{10}^{\pm}$ denotes a half-spin module and
$J_3(\mathbb O_{\C})$ the complex Albert algebra.

\begin{table}[H]
\centering
\caption{Cominuscule varieties and their isotropy representations.}
\label{tab:ihss-list}
\renewcommand{\arraystretch}{1.08}
\begin{tabular}{@{}>{$}l<{$}>{$}l<{$}>{$}l<{$}>{$}l<{$}@{}}
\toprule
X&W&\fl&\dim X\\
\midrule
\Gr(k,N)&
 (\C^{k})^*\otimes\C^{N-k}&
 \mathfrak{sl}_k\oplus\mathfrak{sl}_{N-k}&k(N-k)\\
Q^n&\C^n&\mathfrak{so}_n&n\\
\LG(r,2r)&
 \Sym^2 (\C^{r})^*&\mathfrak{sl}_r&r(r+1)/2\\
\mathbb S_r&
 \Lambda^2 (\C^{r})^*&\mathfrak{sl}_r&r(r-1)/2\\
E_6/P_1,\ E_6/P_6&
 \Delta_{10}^{\pm}&\mathfrak{so}_{10}&16\\
E_7/P_7&
 J_3(\mathbb O_{\C})&\mathfrak e_6&27\\
\bottomrule
\end{tabular}
\end{table}

\begin{lemma} \label{lem:rank-one}
Let $X=G/P$ be a cominuscule variety with
$G=\Aut^0(X)$ and $W=\fg_{-1}$ the abelian cominuscule $\fg_0$-module.
 If $X$ is not a projective space, then every nonzero nilpotent element in the image of  $\fg_0\longrightarrow\mathfrak{gl}(W)$
has rank at least two.
\end{lemma}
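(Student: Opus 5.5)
We argue case by case through Table~\ref{tab:ihss-list}. The image of $\fg_0$ in $\mathfrak{gl}(W)$ is the reductive algebra $\fl\oplus\C$, where the centre acts by scalars. A nilpotent element of $\fl\oplus\C$ has zero central part, since the image is a direct sum of a semisimple part and scalars and Jordan decomposition is compatible with it. So it suffices to show that every nonzero nilpotent $A\in\fl$ acts on $W$ with rank at least $2$. Rank is lower semicontinuous and the nilpotent orbits of $\fl$ all contain the minimal orbit in their closure (simple factor by simple factor). It is therefore enough to check the claim on a root vector $e_\beta$ of each simple factor of $\fl$. A root vector of a simple ideal lies in the closure of every nonzero nilpotent orbit of that ideal, and a nonzero nilpotent of $\fl$ has a nonzero component in some simple ideal. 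Its rank dominates that of the component after degenerating the other components to $0$ by the torus, or alternatively one computes directly in the product cases.

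For a root vector $e_\beta$, the rank on $W$ equals the number of weights $\mu$ of $W$ with $\mu+\beta$ also a weight, since $W$ is minuscule for $\fl$ and all weight spaces are $1$-dimensional. This is the number of $\beta$-strings of length two among the weights of $W$. We then go through the table.
\begin{itemize}
\item For $\Gr(k,N)$ with $2\le k\le N-2$, a root vector of $\mathfrak{sl}_k$ acts as $E\otimes 1$ on $(\C^k)^*\otimes\C^{N-k}$, so its rank is $N-k\ge2$; symmetrically for $\mathfrak{sl}_{N-k}$. The excluded cases $k=1$ and $k=N-1$ are exactly the projective spaces.
\item For $Q^n$ with $n\ge3$, a root vector of $\mathfrak{so}_n$ acting on $\C^n$ has rank $2$, as does every nonzero nilpotent, because nilpotents of $\mathfrak{so}_n$ have even rank. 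The cases $Q^1=\PP^1$ and $Q^2=\PP^1\times\PP^1$ are not of the form $G/P$ with $G$ simple, or else are excluded.
\item For $\LG(r,2r)$ with $r\ge2$, $e_{ij}$ acts on $\Sym^2$ with rank $r\ge2$. For $r=1$ we get $\PP^1$.
\item For $\mathbb S_r$, $e_{ij}$ acts on $\Lambda^2(\C^r)^*$ with rank $r-2$. We need $r\ge4$, since $\mathbb S_4\simeq Q^6$ and $\mathbb S_3\simeq\PP^3$, the latter excluded as a projective space. We must also recall that $G=\Aut^0(X)$ forces $r\ge5$ in the genuine spinor case, and for $r=4$ the rank is still $2$.
\item For $E_6/P_1$, a root vector of $\mathfrak{so}_{10}$ on the half-spin module has rank $4$.
\item For $E_7/P_7$, a root vector of $\mathfrak e_6$ on $J_3(\mathbb O_\C)$ has rank $6$.
\end{itemize}
These last two counts come from standard weight computations, or from the fact that the minimal nilpotent acts with rank equal to $\dim W-\dim(\text{sum of weight spaces not shifted})$.

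A uniform alternative avoids the tables. A rank-one nilpotent $A$ in $\fg_0$ acting on $W$ would give, via Lemma~\ref{lem:filtered-complement}-type arguments, a subalgebra whose exponential moves points along a single line. Irreducibility of $W$ under $\fl$ together with a rank-one nilpotent forces $\fl\oplus\C\supset\mathfrak{sl}(W)$, by the classical result that an irreducible linear Lie algebra containing a rank-one nilpotent (a transvection) is $\mathfrak{sl}(W)$ or $\mathfrak{gl}(W)$. That in turn forces $X=\PP(W\oplus\C)$. I would present this as the main argument and keep the tables as a check.

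The main obstacle is the reduction from arbitrary nilpotents to root vectors, specifically the semicontinuity and product-factor step, and making sure the low-dimensional coincidences such as $Q^2$, $\mathbb S_3$ and $\LG(1,2)$ are correctly excluded or handled.
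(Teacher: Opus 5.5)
Your table-based argument is essentially the paper's proof, and it should be the one you present. The ``uniform alternative'' you propose to lead with has a step that fails.

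\textbf{The uniform alternative.} The classical result you quote is false as stated. Take a symplectic form $\omega$ on $W$ and $v\neq0$. Then $x\mapsto\omega(v,x)v$ is a rank-one nilpotent element of $\mathfrak{sp}(W)$, and $\mathfrak{sp}(W)$ acts irreducibly on $W$. So the list of irreducible linear Lie algebras containing a rank-one element must also include $\mathfrak{sp}(W)$ and $\mathfrak{csp}(W)$. To make this route work you would still need two further steps:
\begin{enumerate}
\item exclude $\fg_0|_W=\mathfrak{csp}(W)$ with $\dim W\ge4$;
\item justify that $\fg_0|_W=\mathfrak{gl}(W)$ forces $X\simeq\PP^{\dim W}$.
\end{enumerate}
Both require either inspecting Table~\ref{tab:ihss-list} or a dimension/rank count for $\fg=W\oplus\fg_0\oplus W^*$. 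So this route is not actually table-free. The sentence about ``a subalgebra whose exponential moves points along a single line'' carries no content and should be dropped.

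\textbf{The table-based argument.} It follows the paper's proof:
\begin{itemize}
\item Remove the scalar part. The paper uses $\operatorname{tr}=0$; your Jordan-decomposition argument also works.
\item Degenerate a nonzero nilpotent of $\fl$ to a root vector in one simple factor, using orbit closures and lower semicontinuity of rank.
\item Read off the ranks. Your values agree with Table~\ref{tab:root-operator-ranks}.
\end{itemize}
For $Q^n$, your remark that every $A\in\mathfrak{so}(W,q)$ has even rank, because $(x,y)\mapsto q(Ax,y)$ is alternating, handles all nilpotents at once. That is a nice shortcut over the paper.

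Two slips need fixing:
\begin{enumerate}
\item Only \emph{long} root vectors, i.e.\ elements of the minimal orbit, lie in the closure of every nonzero nilpotent orbit. A short root vector of $\mathfrak{so}_{2k+1}$ with $k\ge2$ does not, since it has Jordan type $(3,1,\dots)$ rather than $(2,2,1,\dots)$. So reduce to a highest-root vector, as the paper does.
\item $W$ is not minuscule for $\fl$ in the case $\LG(r,2r)$, where the highest weight is $2\omega_1$, nor for odd quadrics. What you actually need is that all weight spaces are one-dimensional. Then each $\beta$-string spans an irreducible $\mathfrak{sl}_2$-module, which gives $\rank\rho(e_\beta)=\#\{\mu:\mu+\beta\text{ is a weight}\}$. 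Counting ``$\beta$-strings of length two'' instead would give $r-2$ on $\Sym^2$, not the correct value $r$ that you state.
\end{enumerate}
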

\begin{proof}

Let $\rho:\fg_0\to\mathfrak{gl}(W)$ denote the isotropy
representation, which is faithful. 

Write $\fg_0=\fl\oplus\C E$, where
$\fl=[\fg_0,\fg_0]$ and $E$ acts on $W$ by a
nonzero scalar. Let $v=\rho(s)+cI_W \in \mathfrak{gl}(W)$ be nilpotent, with
$s\in\fl$. Since every representation of the semisimple Lie
algebra $\fl$ has trace zero,
$0=\operatorname{tr} v=c\dim W$; thus $c=0$. The faithfulness of $\rho$
and compatibility with Jordan decomposition then imply that $s$ is a
nilpotent element of $\fl$.

Write $\fl=\bigoplus_j\fl_j$ as a sum of simple ideals,
and write $s=(s_j)$. Choose $i$ with $s_i\ne0$. Let
$\mathcal O_{\min,i}$ be the minimal nonzero nilpotent orbit in
$\fl_i$. The standard closure order on nilpotent orbits gives
$\mathcal O_{\min,i}\subset\overline{\mathcal O_{s_i}}$, whereas
$0\in\overline{\mathcal O_{s_j}}$ for $j\ne i$. So if
$e_{\theta_i}$ is a highest-long-root vector of $\fl_i$, then
\[
 (0,\ldots,e_{\theta_i},\ldots,0)
 \in\overline{\mathcal O_s}.
\]
For every $r$, the determinantal locus
$\{T\in\mathfrak{gl}(W):\rank T\leq r\}$ is Zariski closed. Therefore
\[
 \rank\rho(e_{\theta_i})\leq\rank\rho(s).
\]
It remains to compute the rank of a highest-long-root vector in each
simple Levi factor. The values, in the notation of
Table~\ref{tab:ihss-list}, are given in Table~\ref{tab:root-operator-ranks}.
\begin{table}[H]
\centering
\caption{Least ranks of nonzero nilpotent root operators in the isotropy
representations.}
\label{tab:root-operator-ranks}
\renewcommand{\arraystretch}{1.08}
\begin{tabular}{@{}>{$}l<{$}>{$}l<{$}>{$}l<{$}>{$}l<{$}@{}}
\toprule
X&W&\fl&\text{least }\rank\rho(e_\theta)\\
\midrule
\Gr(k,N),\ 2\le k\le N-2&
 (\C^k)^*\otimes\C^{N-k}&
 \mathfrak{sl}_k\oplus\mathfrak{sl}_{N-k}&\min\{k,N-k\}\\
Q^n,\ n\ge3&\C^n&\mathfrak{so}_n&2\\
\LG(r,2r),\ r\ge2&
 \Sym^2(\C^r)^*&\mathfrak{sl}_r&r\\
\mathbb S_r,\ r\ge4&
 \Lambda^2(\C^r)^*&\mathfrak{sl}_r&r-2\\
E_6/P_1,\ E_6/P_6&
 \Delta_{10}^{\pm}&\mathfrak{so}_{10}&4\\
E_7/P_7&
 J_3(\mathbb O_{\C})&\mathfrak e_6&6\\
\bottomrule
\end{tabular}
\end{table}
For the classical tensor representations, these numbers follow by
applying a root matrix $E_{12}$. Its rank on
$(\C^k)^*\otimes\C^{N-k}$ is $N-k$ or $k$, according to the simple
factor; its ranks on $\Sym^2(\C^r)^*$ and $\Lambda^2(\C^r)^*$ are
respectively $r$ and $r-2$. A long-root operator on the standard orthogonal
module has rank two. For the half-spin representation, use
$S^+=\Lambda^{\mathrm{even}}\C^5$: exterior multiplication by
$e_1\wedge e_2$ has rank $1+\binom{3}{2}=4$. In the minimal
$27$-dimensional $E_6$-module, Proposition~1 of \cite{Pevzner} states
that $(x_\alpha(t)-I)W$ is six-dimensional for every nontrivial element
$x_\alpha(t)$ of a root subgroup. Since this representation is
minuscule, $\rho(e_\alpha)^2=0$ and
$x_\alpha(t)-I=t\rho(e_\alpha)$. Hence $\rho(e_\alpha)$ has rank six.

All the displayed ranks are at least two. The omitted cases
$\Gr(1,N)\simeq\Gr(N-1,N)$ and $\mathbb S_3\simeq\PP^3$ are precisely the
projective-space cases.
\end{proof}

We now apply the graph equations of Proposition~\ref{prop:graph}
to the entries of Table~\ref{tab:ihss-list}.

\subsection{Grassmannians}

\begin{proposition}\label{prop:grassmannians}
A Grassmannian is a Heisenberg equivariant compactification if and only if it is an odd-dimensional projective space.
\end{proposition}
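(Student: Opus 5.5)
We prove the two directions separately. For the ``if'' direction, note that $\PP^{2m+1}$ is the adjoint variety of $\operatorname{Sp}_{2m+2}$. By the discussion in the introduction, the Heisenberg group $\exp(\fg_-)$ of the contact grading then acts simply transitively on the open Bruhat cell, so $\PP^{2m+1}$ is a Heisenberg equivariant compactification. Any Grassmannian isomorphic to an odd-dimensional projective space is therefore one as well. Conversely, $\Gr(1,N)\simeq\Gr(N-1,N)\simeq\PP^{N-1}$, and the dimension of an equivariant compactification of $H$ is $\dim H=2m+1$, so a projective space must be odd-dimensional. It thus remains to rule out $X=\Gr(k,N)$ with $2\le k\le N-2$.

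Assume $X=\Gr(k,N)$ with $2\le k\le N-2$ is a Heisenberg compactification. We use Proposition~\ref{prop:graph} and take $f$, $z$, $\omega$ as there, with $W=(\C^k)^*\otimes\C^{N-k}$, which we identify with $\operatorname{Hom}(A,B)$ where $\dim A=k$ and $\dim B=N-k$. Here $\fn$ consists of pairs of strictly triangular matrices $(a,b)$, and $f(x)y=by-ya$. The key point is equation~\eqref{eq:graph-torsion} applied with $y=z$. Since $\omega(x,z)=0$, it gives $f(x)z=f(z)x$ for all $x$. The left side is $\rho(f(x))z$. The right side is the operator $\rho(f(z))$ evaluated at $x$. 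Now $\rho(f(z))$ is a nilpotent element in the image of $\fg_0$, so by Lemma~\ref{lem:rank-one} it is either zero or has rank at least $\min\{k,N-k\}\ge2$.

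Next, we exploit the torsion equation more fully. Fix $x$ with $\omega(x,\cdot)\neq0$. Equation~\eqref{eq:graph-torsion} says that the map $y\mapsto f(x)y-f(y)x$ has image contained in $\C z$. We will show that this forces a low-rank nilpotent in the image of $\rho$. To do so, consider the bilinear map $\beta(x,y)=f(x)y-f(y)x$. Its antisymmetric part $\omega(x,y)z$ has rank-one image, yet $\omega$ is nondegenerate on $W/\C z$, of dimension $2m\ge2$ when $\dim X\ge4$. Our first attempt is to choose $x,y$ with $\omega(x,y)=1$ and use the triangular structure. Take $z$ and a weight basis adapted to the Borel subgroup. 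Then, using the $T$-weights, we compare the lowest-weight components: the equation $\rho(f(x))y-\rho(f(y))x=\omega(x,y)z$ must produce $z$ from strictly triangular operators. An inductive weight argument on the partial order of weights of $W$ (the rectangle poset of positions $(i,j)$) should then show that $z$ must be a highest-weight-type vector, that is, a rank-one matrix $e_1^*\otimes e_{N-k}$. It should also show that $f(z)=0$ by~\eqref{eq:graph-closure} and the step above.

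The main obstacle is to get a contradiction from this configuration. Our plan is to show that $\omega(x,y)\neq0$ requires, for suitable $x,y$, that $\rho(f(x))$ map $W$ onto a space of dimension at most $1$ modulo lower terms. This would make $\rho(f(x))$ a nonzero nilpotent of rank $\le1$, contradicting Lemma~\ref{lem:rank-one}. Concretely, we write $\omega=\sum x_i^*\wedge y_i^*$ on a complement of $z$ and take $x$ to be the lowest-weight vector among those not in $\ker\omega$. Then $f(x)y\in\C z+\operatorname{span}\{f(y)x\}$. Since $f(y)x$ ranges over $\rho(\fn)x$, this is a space whose dimension is bounded by the rank of the orbit map. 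We then bound ranks via the rectangle poset to conclude. If this rank argument fails, the fallback is a direct dimension count: $\dim\fn=\binom{k}{2}+\binom{N-k}{2}$ must accommodate $2m$ independent values of $f$ satisfying~\eqref{eq:graph-closure}, which makes $f(W)$ a Heisenberg algebra of strictly upper triangular matrices acting on $W$ as required. We would compare this against the explicit small cases $\Gr(2,4)=Q^4$, which is even-dimensional and so excluded, and $\Gr(2,5)$ and beyond, checked through the weight bookkeeping.
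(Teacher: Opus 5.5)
Your ``if'' direction and the reduction to $2\le k\le N-2$ are fine, and the identity $f(x)z=f(z)x$ is correct. However, the heart of the ``only if'' direction is missing.

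The weight-poset argument that $z$ is a highest-weight vector and that $f(z)=0$ is only asserted. More seriously, the planned contradiction has no mechanism behind it. From \eqref{eq:graph-torsion} you only get $\operatorname{im}\rho(f(x))\subset\C z+\fn\cdot x$, and $\fn\cdot x$ is not small. For instance, for the decomposable tensor $x=e_1^*\otimes e_{N-k}$ it has dimension $N-2\ge4$. So nothing forces a nilpotent of rank $\le1$.

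The fallback dimension count fails as well. The map $f$ need not be injective: \eqref{eq:graph-torsion} only makes $\ker f$ an $\omega$-isotropic subspace, which gives $\dim f(W)\ge m$. For $\Gr(3,6)$ this reads $4\le 6=\dim\fn$, so there is no contradiction.

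You also never use parity. Since $k(N-k)$ is odd, both $k$ and $N-k$ are odd, hence at least $3$. Every $\Gr(2,N)$ is even-dimensional, so $\Gr(2,5)$ is not a case at all.

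The idea you are missing is how $\fg_0$ moves decomposable tensors: $\fg_0\cdot(a\otimes b)\subset A\otimes b+a\otimes B$. For two-planes $U\subset A$ and $V\subset B$, consider the projection $\pi_{U,V}\colon A\otimes B\to(A/U)\otimes(B/V)$. Its kernel $A\otimes V+U\otimes B$ contains both $f(x)y$ and $f(y)x$ whenever $x,y\in U\otimes V$ are decomposable.

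Because $\dim A,\dim B\ge3$, you can choose $U\subset\ker\alpha$ and $V\subset\ker\beta$ with $(\alpha\otimes\beta)(z)\neq0$, so that $\pi_{U,V}(z)\neq0$. Applying $\pi_{U,V}$ to \eqref{eq:graph-torsion} then gives $\omega(x,y)=0$ for such $x,y$, hence $\omega|_{U\otimes V}=0$. This is a closed condition in $(U,V)$ that holds on a dense open set, so it holds for all pairs of two-planes.

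Any two decomposable tensors lie in a common $U\otimes V$, and decomposable tensors span $W$. Therefore $\omega=0$, contradicting \eqref{eq:graph-radical}. Note that this argument uses only the torsion equation; neither the triangular structure of $\fn$ nor Lemma~\ref{lem:rank-one} is needed.
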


\begin{proof}
Write $X=\Gr(k,N)$ and put $A=(\C^{k})^*$,
$B=\C^{N-k}$, so $W=A\otimes B$. The cases $k=1$ and
$N-k=1$ are projective spaces. Assume $2\le k\le N-2$.
As $\dim X=k(N-k)$ is odd,  $\dim A,\dim B$ are odd and at least $3$.

Suppose the graph equations of Proposition~\ref{prop:graph} hold. For
$U\in\Gr(2,A)$ and $V\in\Gr(2,B)$, consider
\[
 \pi_{U,V}:A\otimes B\longrightarrow(A/U)\otimes(B/V),
 \qquad
 \ker\pi_{U,V}=A\otimes V+U\otimes B.
\]

Choose $\alpha\in A^*$, $\beta\in B^*$ satisfying
$(\alpha\otimes\beta)(z)\ne0$, and take any
$U\subset\ker\alpha$, $V\subset\ker\beta$. Then $\pi_{U,V}(z)\ne0$. Thus the pairs $(U, V)$ satisfying this condition form
a nonempty dense open subset of $\Gr(2, A) \times \Gr(2, B)$.

For $x=a\otimes b$ and $y=c\otimes d$, with
$a,c\in U$ and $b,d\in V$, the infinitesimal tensor action gives
\[
 \fl\cdot(a\otimes b)\subset A\otimes b+a\otimes B.
\]
Thus, both terms on the left of \eqref{eq:graph-torsion} lie in
$\ker\pi_{U,V}$. Applying $\pi_{U,V}$, and using
$\pi_{U,V}(z)\ne0$, gives
$\omega(a\otimes b,c\otimes d)=0$.
Decomposable tensors span $U\otimes V$, so
$\omega|_{U\otimes V}=0$. Since this is an algebraic condition in
$(U, V)$, it holds for every pair of two-planes. Any two
decomposable tensors lie in some $U\otimes V$, and decomposable
tensors span $A\otimes B$; hence $\omega=0$, contradicting
\eqref{eq:graph-radical}.
\end{proof}

\subsection{Quadrics}

\begin{proposition}\label{prop:quadrics}
A smooth  quadric $Q^n$ is a Heisenberg equivariant compactification  if and only if $n=1$ or $3$.
\end{proposition}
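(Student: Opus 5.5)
We treat the two directions separately. First, $n$ must be odd. For $n=1$, $Q^1\simeq\PP^1$ is the adjoint variety of type $A_1$, hence a compactification of $\mathbb G_a$. For $n=3$, we use the coincidence $Q^3\simeq \operatorname{Sp}_4/P$, where $P$ is the stabilizer of a Lagrangian plane, i.e.\ $Q^3\simeq\LG(2,4)$. For $\mathfrak{sp}_4=C_2$ the adjoint variety is $\PP^3$, not $Q^3$, so we cannot simply cite the adjoint case. Instead we produce $\fh$ directly with the graph equations, read in the converse direction. Let $W=\C^3$ be the standard $\mathfrak{so}_3$-module, with quadratic form $q(e_1,e_3)=1$, $q(e_2,e_2)=1$ and $\fg_0=\mathfrak{so}_3\oplus\C E$. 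Take $\fn$ to be spanned by the nilpotent $N\in\mathfrak{so}_3$ with $Ne_3=e_2$, $Ne_2=-e_1$, $Ne_1=0$. Define $f(e_1)=f(e_2)=0$ and $f(e_3)=N$, and set $z=e_1$. Then $f(e_3)e_2-f(e_2)e_3=-e_1$, $f(e_3)e_1=0$, and $f(x)y-f(y)x=0$ on $\operatorname{span}(e_1,e_2)$. So \eqref{eq:graph-torsion} holds with $\omega=-e_3^*\wedge e_2^*$, whose kernel is $\C e_1=\C z$. Moreover $[f(x),f(y)]=0=\omega(x,y)f(z)$ because $f(z)=0$ and the image of $f$ is one-dimensional. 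Hence $\fh=\{f(x)+x\}$ is a Lie subalgebra isomorphic to the Heisenberg algebra and complementary to $\fp$. Its group $H$ is unipotent, so the orbit $H\cdot o$ is open with finite stabilizer, and a unipotent group has no nontrivial finite subgroups. The orbit map is therefore an isomorphism onto the open orbit, and $Q^3$ is a Heisenberg equivariant compactification. (Equivalently, one can write down explicit $3\times3$ orthogonal matrices.)

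For the converse, assume $n=2m+1\ge5$ and that the graph equations of Proposition~\ref{prop:graph} hold, with $W=\C^n$ and $\fn\subset\mathfrak{so}_n$ nilpotent, since the scalar $E$ is not in $\fu$. The key observation is that \eqref{eq:graph-torsion} and the orthogonality of $f(x)$ give $q(f(x)y,y)=0$ and $q(f(x)y,x)=-q(y,f(x)x)$. The plan is to pair \eqref{eq:graph-torsion} with suitable vectors. Pair it with $x$ to get $q(f(x)y,x)=\omega(x,y)q(z,x)$, since $q(f(y)x,x)=0$. Hence $q(f(x)x,y)=-\omega(x,y)q(z,x)$ for all $y$, which gives
\[ f(x)x=-q(z,x)\,\omega^\flat(x), \]
where $\omega^\flat(x)\in W$ is defined by $q(\omega^\flat(x),\cdot)=\omega(x,\cdot)$. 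Thus the quadratic map $x\mapsto f(x)x$ is essentially determined by $\omega$ and $z$. Polarizing recovers $f(x)y+f(y)x$. Combined with \eqref{eq:graph-torsion}, this shows
\[ 2f(x)y=-q(z,x)\omega^\flat(y)-q(z,y)\omega^\flat(x)+\omega(x,y)z. \]
In particular, every $f(x)$ has rank at most $3$ and is explicitly determined. The next step is to impose the condition that $f(x)$ is skew with respect to $q$. Skewness of the term $q(z,x)\omega^\flat(y)$ in $y$ forces $\omega^\flat$ to be skew, which holds. The remaining terms give $u\otimes v$ expressions of the form $q(z,y)q(\omega^\flat(x),w)+q(z,w)q(\omega^\flat(x),y)$ together with $\omega(x,y)q(z,w)+\omega(x,w)q(z,y)$. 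These cancel only partially, leaving the condition $q(z,w)\omega(x,y)+q(z,y)\omega(x,w)=0$ for all $x,y,w$. Taking $y=w$ gives $q(z,y)\omega(x,y)=0$, so $\omega(x,y)=0$ whenever $q(z,y)\ne0$, and by density $\omega=0$, a contradiction. If this naive cancellation does not hold exactly, as the existence of the $n=3$ example suggests it must not, I would instead use nilpotency: $f(x)$ must lie in the nilpotent $\fn$, so the values $f(x)$ commute modulo \eqref{eq:graph-closure}. Combining this with Lemma~\ref{lem:rank-one}, which says each nonzero $f(x)$ has rank at least two, and with a dimension count, should bound the rank of $\omega$ by something like $2$, forcing $n\le3$.

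I expect the main obstacle to be exactly this step: correctly extracting a contradiction from the explicit formula for $f$ when $n\ge5$. The degenerate $n=3$ solution, in which $\omega^\flat$ maps into $z^\perp$ and $z$ is isotropic, must survive the argument. My first attempt will therefore be to show that $z$ is isotropic and that $\omega^\flat(W)\subset z^\perp\cap$ (a small isotropic space). For $n\ge5$, the requirement $\ker\omega=\C z$ makes $\rank\omega=n-1\ge4$, so $\omega^\flat(W)$ would have to be isotropic of dimension $n-1>\lfloor n/2\rfloor$. This is impossible, and it gives the contradiction.
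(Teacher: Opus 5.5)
\textbf{The converse direction has a gap.} Your existence construction for $Q^3$ is correct; it is the paper's root-vector example in other coordinates. Your derivation of $2f(x)y=\omega(x,y)z-q(z,x)\omega^\flat(y)-q(z,y)\omega^\flat(x)$ is also valid: pairing \eqref{eq:graph-torsion} with $x$ and polarizing gives exactly the paper's identity \eqref{eq:quadric-koszul} with $\omega^\flat=A$, and reaches it more quickly than the cyclic sums. (The remark that $f(x)$ has rank at most $3$ is wrong, since the term $q(z,x)\omega^\flat$ has rank $n-1$ when $q(z,x)\neq0$.)

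Everything after this formula fails. Imposing skewness gives nothing, because the right-hand side is automatically $q$-skew in $y$. The residual condition $q(z,w)\omega(x,y)+q(z,y)\omega(x,w)=0$ comes from a sign error, as your own $n=3$ example confirms. Your final plan then targets the wrong subspace. Since $q(\omega^\flat v,z)=\omega(v,z)=0$, the image $\omega^\flat(W)$ lies in $z^\perp$ and has dimension $n-1$. So $\omega^\flat(W)=z^\perp$, which is never totally isotropic for $n\ge3$. In your example, $\omega^\flat(W)=\operatorname{span}(e_1,e_2)$ with $q(e_2,e_2)=1$. Moreover $n-1>\lfloor n/2\rfloor$ already holds for $n=3$, so that argument, if it worked, would also exclude $Q^3$. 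The fallback via Lemma~\ref{lem:rank-one} and ``a dimension count'' is not an argument.

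The missing steps all follow from the formula you already have.
\begin{itemize}
\item Setting $x=z$ gives $f(z)=-\tfrac12 q(z,z)\,\omega^\flat$.
\item If $q(z,z)\neq0$, then $\omega^\flat$ maps $z^\perp$ into itself and is injective there, because its kernel $\C z$ meets $z^\perp$ trivially. So $f(z)$ is not nilpotent, contradicting $f(z)\in\fn$.
\item Hence $q(z,z)=0$ and $f(z)=0$, and \eqref{eq:graph-closure} then makes all the $f(x)$ commute.
\item Take $E=z^\perp$, choose $w$ with $q(z,w)=1$, and let $u,v\in E$. The formula gives $2f(u)v=\omega(u,v)z$, $2f(w)v=\omega(w,v)z-\omega^\flat v$, $f(\cdot)z=0$, and $\omega^\flat v\in E$.
\item Therefore $0=[f(w),f(u)]v=\tfrac14\, q(\omega^\flat u,\omega^\flat v)\,z$.
\end{itemize}
Thus $\omega^\flat(E)$, which has dimension $n-2$, is totally isotropic. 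This gives $n-2\le\lfloor n/2\rfloor$, hence $n\le3$. This is consistent with your example, where $\omega^\flat(E)=\C e_1$.
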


\begin{proof}
We may assume $n \geq 3$ is odd. Let $W$ be the $n$-dimensional cominuscule
 module, equipped with its nondegenerate symmetric form $q$.
Proposition~\ref{prop:graph} gives $f,z,\omega$, with
$f(W)\subset\mathfrak{so}(W,q)$, satisfying
\eqref{eq:graph-radical}--\eqref{eq:graph-closure}.

Define $A\in\mathfrak{so}(W,q)$ and $\zeta\in W^*$ by
\[
 q(Ax,y)=\omega(x,y),\qquad \zeta(x)=q(z,x).
\]
The nondegeneracy of $q$ and \eqref{eq:graph-radical} give
$\ker A=\C z$, in particular $Az=0$.

Define $T(x,y,w)=q(f(x)y,w)$.  As $f(x)\in\mathfrak{so}(W,q)$, one has
$T(x,y,w)=-T(x,w,y)$. Pairing
\eqref{eq:graph-torsion} with $w$, and then cyclically permuting
$(x,y,w)$, gives
\begin{equation}\label{eq:quadric-cyclic}
\begin{aligned}
 T(x,y,w)+T(y,w,x)&=\omega(x,y)\zeta(w),\\
 T(y,w,x)+T(w,x,y)&=\omega(y,w)\zeta(x),\\
 T(w,x,y)+T(x,y,w)&=\omega(w,x)\zeta(y).
\end{aligned}
\end{equation}
Adding the first and third identities in
\eqref{eq:quadric-cyclic} and subtracting the second yields
\[
 2q(f(x)y,w)
 =\omega(x,y)\zeta(w)-\omega(y,w)\zeta(x)
   +\omega(w,x)\zeta(y).
\]
Using $q(Au,v)=\omega(u,v)$, the right-hand side is the
$q$-pairing with $w$ of
$\omega(x,y)z-\zeta(x)Ay-\zeta(y)Ax$.
Since $q$ is nondegenerate, we obtain the following identity
\begin{equation}\label{eq:quadric-koszul}
 2f(x)y=\omega(x,y)z-\zeta(x)Ay-\zeta(y)Ax.
\end{equation}
Putting $x=z$ in \eqref{eq:quadric-koszul}, and using
$\omega(z,-)=0$, $Az=0$, and $\zeta(z)=q(z,z)$, we obtain the
following identity
\begin{equation}\label{eq:quadric-center}
 f(z)=-\frac{q(z,z)}2A.
\end{equation}

If $q(z,z)\ne0$, we have the $q$-orthogonal decomposition $W=\C z\oplus z^\perp$. As $\ker A=\C z$ and $A$ preserves $z^\perp$, the restricted operator $A|_{z^\perp}$ is invertible.
Thus $f(z)$ is not nilpotent, contradicting $f(z)\in\fn$.
Therefore
\begin{equation}\label{eq:quadric-isotropic-center}
 q(z,z)=0,\qquad f(z)=0.
\end{equation}
Condition \eqref{eq:graph-closure} now implies that all $f(x)$ commute.

Put $E=z^\perp$, and choose $w\in W$ with $q(z,w)=1$.
For $u,v\in E$, equation \eqref{eq:quadric-koszul} gives
\[
 2f(u)v=\omega(u,v)z,\qquad
 2f(w)v=\omega(w,v)z-Av,\qquad f(x)z=0.
\]
It then follows
\[
 0=[f(w),f(u)]v
   =\frac12f(u)Av
   =\frac14\omega(u,Av)z
   =\frac14q(Au,Av)z.
\]
Thus $A(E)$ is totally $q$-isotropic. On the other hand,
\[
 \dim A(E)=\dim E-\dim(\ker A\cap E)=n-2.
\]
But the isotropic subspace has maximal possible dimension $\left\lfloor\frac n2\right\rfloor$, which gives $ n-2\le\left\lfloor\frac n2\right\rfloor=\frac{n-1}{2}$, so $n\le3$, which gives $n=3$.

For existence, write $Q^3=B_2/P_1$, with $\alpha_1$ long and
$\alpha_2$ short. Let us consider the following elements: 
\[
 x=e_{\alpha_1}+e_{\alpha_2},\qquad
 y=e_{\alpha_1+\alpha_2},\qquad
 z=e_{\alpha_1+2\alpha_2}.
\]
Then we have some constant $c$ such that
\[
 [x,y]=cz\ne0,\qquad [x,z]=[y,z]=0,
\]
which implies that $x, y, z$ generate a three-dimensional Heisenberg algebra.
Their projections to the three-dimensional cominuscule module $W$ are the
three root vectors indexed by
$\alpha_1,\alpha_1+\alpha_2,\alpha_1+2\alpha_2$, hence form a basis.
The corresponding connected Heisenberg subgroup therefore has an open
orbit on $Q^3$.
\end{proof}


\subsection{Lagrangian Grassmannians}

\begin{proposition}\label{prop:lagrangians}
Among the Lagrangian Grassmannians, only $\LG(1,2) \simeq \mathbb{P}^1$ and
$\LG(2,4)\simeq Q^3$ are Heisenberg equivariant compactifications.
\end{proposition}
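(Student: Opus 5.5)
Write $X=\LG(r,2r)$ with $W=\Sym^2 A$, $A=(\C^r)^*$, $\dim W=r(r+1)/2$. The cases $r=1$ and $r=2$ are covered by the isomorphisms $\LG(1,2)\simeq\PP^1$ and $\LG(2,4)\simeq Q^3$, together with Proposition~\ref{prop:quadrics} and the trivial case of $\PP^1$. So I assume $r\ge3$ with $r(r+1)/2$ odd, that is $r\equiv 1,2\pmod 4$, and aim to show the graph equations of Proposition~\ref{prop:graph} have no solution. The plan follows the Grassmannian proof: localize to small subspaces where the infinitesimal action of $\fl=\mathfrak{sl}(A)$ cannot reach $z$, and deduce that $\omega$ vanishes.

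For $U\in\Gr(2,A)$, consider the quotient
\[
\pi_U:\Sym^2A\to\Sym^2(A/U),\qquad \ker\pi_U=U\cdot A .
\]
For $a,b\in U$ and $g\in\fl$ we have $g(ab)=(ga)b+a(gb)\in U\cdot A$. Hence both terms $f(x)y$ and $f(y)x$ of \eqref{eq:graph-torsion} lie in $\ker\pi_U$ whenever $x,y\in\Sym^2U$; in fact it suffices that $x,y\in U\cdot A$ and each contain a factor from $U$ in a suitable way. The simplest version uses $x=a^2$ and $y=b^2$ with $a,b\in U$, where $f(x)y=2(f(x)b)b\in U\cdot A$ because $b\in U$.

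Since $r\ge3$, the space $A/U$ is nonzero, and the set of $U$ with $\pi_U(z)\neq0$ is nonempty and open: choose $\alpha\in A^*$ with $z(\alpha,\alpha)\ne0$ and take $U\subset\ker\alpha$. Applying $\pi_U$ to \eqref{eq:graph-torsion} gives $\omega(x,y)\pi_U(z)=0$, so $\omega|_{\Sym^2U}=0$ for generic $U$, and by closedness for all $U$.

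The main obstacle is that, unlike $A\otimes B$, the pairs in $\Sym^2U$ only involve squares lying in a common plane. I expect this is still enough. Any two squares $a^2,c^2$ lie in $\Sym^2\langle a,c\rangle$, so $\omega(a^2,c^2)=0$ for all $a,c$. Squares span $\Sym^2A$, and $\omega$ is bilinear, so $\omega=0$. This contradicts \eqref{eq:graph-radical}, since $\dim W\ge3$.

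In fact this argument seems to work for every $r\ge3$. It fails for $r=2$ exactly because then $A/U=0$ and $\pi_U(z)=0$, which is consistent with $\LG(2,4)\simeq Q^3$ being a genuine example. The only care needed is to check that $f(W)\subset\fn\subset\fg_0$ acts on $W$ through $\fl\oplus\C E$. The central element $E$ acts by scalars, and scalars also preserve $U\cdot A$, so the containment $f(x)y\in\ker\pi_U$ still holds. Therefore the proof closes without using the nilpotency in Lemma~\ref{lem:rank-one}.
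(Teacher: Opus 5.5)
Your proof is correct and is essentially the paper's argument: project to $\Sym^2(A/U)$ for a generic two-plane $U$ with $\pi_U(z)\neq 0$. Since $\fg_0$ maps $\Sym^2U$ into $U\cdot A$, you get $\omega|_{\Sym^2U}=0$, extend this to all $U$ by closedness, and conclude $\omega=0$ because squares span $\Sym^2A$.

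The only blemish is your parenthetical claim that $x,y\in U\cdot A$ would already suffice. That is false as stated: for $y=ua$ with $u\in U$ and $a\notin U$, the term $(gu)a$ need not lie in $U\cdot A$. You never actually use this claim, so the proof is unaffected.
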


\begin{proof}
For $X=\LG(r,2r)$, put $A=(\C^{r})^*$, so
$W=\Sym^2A$. We may assume $r\ge3$ and, arguing by contradiction, that the graph equations hold.

For $U\in\Gr(2,A)$, let
\[
 \pi_U:\Sym^2A\longrightarrow\Sym^2(A/U),
 \qquad \ker\pi_U=U\odot A.
\]
Choose $\alpha\in A^*$ with $\alpha^2(z)\ne0$, and then choose
$U\subset\ker\alpha$ a two-plane. Then $\pi_U(z)\ne0$. For $a,b\in U$,
the infinitesimal action satisfies
\[
 \fl\cdot a^2\subset a\odot A.
\]
Applying $\pi_U$ to \eqref{eq:graph-torsion} with
$(x,y)=(a^2,b^2)$ gives $\omega(a^2,b^2)=0$.
Note that squares span $\Sym^2U$. The condition
$\omega|_{\Sym^2U}=0$ is closed on $\Gr(2,A)$. As it holds on a
dense open subset, it holds for every two-plane $U$. Any two squares lie in the symmetric square of
a two-plane, and squares span $\Sym^2A$. Therefore
$\omega=0$, a contradiction.

\end{proof}

\subsection{Spinor varieties}

\begin{proposition}\label{prop:spinors}
A spinor variety is a Heisenberg equivariant compactification if and 
only if it is one of the low-rank coincidences $\mathbb S_2\simeq\PP^1$ or $\mathbb S_3\simeq\PP^3$.
\end{proposition}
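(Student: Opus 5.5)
We follow the same projection strategy as in Propositions~\ref{prop:grassmannians} and~\ref{prop:lagrangians}. Write $X=\mathbb S_r$ and put $A=(\C^r)^*$, so that $W=\Lambda^2A$ with $\fl=\mathfrak{sl}_r$. The dimension is $r(r-1)/2$, and the cases $r=2,3$ give $\PP^1$ and $\PP^3$. Both of these are adjoint varieties (of $\mathfrak{sl}_2$ and $\mathfrak{sp}_4$), so they are Heisenberg compactifications. We may therefore assume $r\ge4$ and, arguing by contradiction, that the graph equations of Proposition~\ref{prop:graph} hold. Oddness of the dimension forces $r\equiv 2,3 \pmod 4$, so the first relevant case is $r=6$. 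We will not use this, except as a check.

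For $U\in\Gr(k,A)$ with $k$ to be chosen, consider
\[
 \pi_U:\Lambda^2A\longrightarrow\Lambda^2(A/U),\qquad \ker\pi_U=U\wedge A.
\]
The infinitesimal action satisfies $\fl\cdot(a\wedge b)\subset a\wedge A+b\wedge A$. Hence, for decomposable $x=a\wedge b$ and $y=c\wedge d$ with $a,b,c,d\in U$, both terms $f(x)y$ and $f(y)x$ on the left of \eqref{eq:graph-torsion} lie in $U\wedge A=\ker\pi_U$. If $\pi_U(z)\ne0$, applying $\pi_U$ gives $\omega(a\wedge b,c\wedge d)=0$. Since decomposable bivectors span $\Lambda^2U$, this yields $\omega|_{\Lambda^2U}=0$.

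The natural choice is $k=4$, because two decomposable bivectors together involve at most four vectors. We then need $\pi_U(z)\ne0$ for generic $U\in\Gr(4,A)$. This holds as soon as $z$ has rank $\ge 6$ as a skew form on $A^*$, equivalently $z\notin\ker\pi_U$ for some $U$. Concretely, we choose $\alpha\wedge\beta\in\Lambda^2A^*$ with $(\alpha\wedge\beta)(z)\ne0$ and try to take $U\subset\ker\alpha\cap\ker\beta$. That intersection has dimension $r-2\ge4$, so such a $U$ exists when $r\ge6$. We must also check that $z$ does not lie in $U\wedge A$. Pairing with $\alpha\wedge\beta$ kills $U\wedge A$ but not $z$, so $\pi_U(z)\ne0$ and the argument goes through. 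Thus the good $U$ form a nonempty open subset, and by closedness $\omega|_{\Lambda^2U}=0$ for every four-plane $U$. Any two decomposable bivectors lie in some $\Lambda^2U$ with $\dim U=4$, and these span $\Lambda^2A$, so $\omega=0$. This contradicts \eqref{eq:graph-radical}. The argument needs $r\ge 6$, which covers all remaining odd-dimensional cases, since $r=4,5$ give even dimensions $6$ and $10$.

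The step I expect to need the most care is the genericity claim: that $U\wedge A$ misses $z$ for an open set of $U$, and that the vanishing condition is closed. The pairing trick above with $\alpha\wedge\beta$ handles the first point as long as $r-2\ge 4$. I would also double-check the parity bookkeeping, $r(r-1)/2$ odd iff $r\equiv2,3\pmod 4$, so that no small case such as $r=4$ or $r=5$ slips through. Otherwise the proof mirrors the Grassmannian case verbatim.
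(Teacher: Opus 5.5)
Your proof is correct and is essentially the paper's projection argument, with one variation: you project along $4$-planes $U$ instead of $3$-planes $K$. As a result, any two decomposable bivectors $a\wedge b$, $c\wedge d$ already lie in a single $\Lambda^2U$, so you bypass the paper's final polarization step (the three relations $x+z=x+y=y+z=0$), at the harmless cost of needing $r-2\ge4$ rather than $r-2\ge3$, which parity guarantees once $r\ge4$.

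The only blemish is your aside that genericity ``holds as soon as $z$ has rank $\ge6$, equivalently $z\notin\ker\pi_U$ for some $U$''. That equivalence is false, but you never use it: your $\alpha\wedge\beta$ argument gives $\pi_U(z)\neq0$ on a nonempty open set of $U$ for every nonzero $z$ once $r\ge6$.
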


\begin{proof}
For $X=\mathbb S_r$, put $A=(\C^{r})^*$, so
$W=\Lambda^2A$. The low-rank coincidences $\mathbb S_2\simeq\PP^1, \mathbb S_3\simeq\PP^3, \mathbb S_4\simeq Q^6$
and the equality $\dim\mathbb S_5=10$ settle $r\le 5$.  We can now assume $r\ge6$ and the graph equations
hold in an odd-dimensional case.

For $K\in\Gr(3,A)$, let
\[
 \pi_K:\Lambda^2A\longrightarrow\Lambda^2(A/K),
 \qquad \ker\pi_K=K\wedge A.
\]
Choose $\alpha,\beta\in A^*$ with
$(\alpha\wedge\beta)(z)\ne0$. Since $r\ge6$, one can choose
$K\subset\ker\alpha\cap\ker\beta$, and then $\pi_K(z)\ne0$.
Every bivector in $\Lambda^2K$ is decomposable, and for a
decomposable $a\wedge b\in\Lambda^2K$ one has
\[
 \fl\cdot(a\wedge b)\subset\operatorname{span}(a,b)\wedge A
 \subset K\wedge A.
\]
Applying $\pi_K$ to \eqref{eq:graph-torsion} gives
$ \omega|_{\Lambda^2K}=0, $ which is a closed condition on ${\rm Gr}(3, A)$. Since it holds on a dense open subset, it holds for every three-plane $K$. 

For arbitrary $a,b,c,d\in A$, set
\[
\begin{aligned}
 x&=\omega(a\wedge b,c\wedge d),\\
 y&=\omega(a\wedge c,d\wedge b),\\
 z&=\omega(a\wedge d,b\wedge c).
\end{aligned}
\]
Apply $ \omega|_{\Lambda^2K}=0$ to the following three pairs of bivectors, each supported on a three-plane:
\[
\begin{aligned}
 &(a+c)\wedge b,\ (a+c)\wedge d,\\
 &(a+d)\wedge b,\ (a+d)\wedge c,\\
 &(a+b)\wedge c,\ (a+b)\wedge d.
\end{aligned}
\]
Bilinear expansion gives
\[
 x+z=0,\qquad x+y=0,\qquad y+z=0.
\]
Hence $x=0$. Since the decomposable bivectors span
$\Lambda^2A$, we obtain $\omega=0$, a contradiction.
\end{proof}

\subsection{Exceptional cases}

The two Cayley planes $E_6/P_1$ and $E_6/P_6$ have dimension
$16$, so parity excludes them. It remains to treat the
$27$-dimensional Freudenthal variety $E_7/P_7$. We use the following standard facts (see for example \cite[Part II, Section~4]{McCrimmon}) about cubic Jordan algebras, with the normalizations fixed below.

Let $\mathbb O_{\C}$ be the complexified octonion algebra, with its
standard conjugation, and let $J=H_3(\mathbb O_{\C}) $ be the Jordan algebra of $3\times3$ Hermitian matrices over
$\mathbb O_{\C}$, endowed with the Jordan product
$x\circ y:=(xy+yx)/2$. This is the $27$-dimensional simple
cubic Jordan algebra called the \emph{complex Albert algebra}. Its
unit is denoted by $e$, its Jordan trace by $\operatorname{Tr}$,
and  $ \langle x,y\rangle:=\operatorname{Tr}(x\circ y)$
is a nondegenerate symmetric bilinear form. The  norm
$N: J\to\C$ is the cubic determinant on Hermitian matrices. We denote
by $N(x,y,z)$ its symmetric trilinear polarization, normalized by
$N(x,x,x)=N(x)$.

For $x \in J$, its \emph{quadratic adjoint} $x^\#\in J$ is uniquely characterized by
\[
  dN_x(y)=\langle x^\#,y\rangle
  \qquad \forall y\in J,
\]
which satisfies
\[
  x\circ x^\#=N(x)e,
  \qquad (x^\#)^\#=N(x)x.
\]
Its polarization is the symmetric bilinear product
\[
  x\times y=(x+y)^\#-x^\#-y^\#
            =d(x^\#)_x(y).
\]
With the preceding normalization, one has
\begin{equation}\label{eq:albert-basic-identities}
  \langle x^\#,y\rangle=3N(x,x,y),\qquad
  \langle x\times v,w\rangle=6N(x,v,w).
\end{equation}
A nonzero element $x$ has Jordan rank one precisely when
$x^\#=0$. So the projectivized rank-one locus is the
Cayley plane $E_6/P_1\subset\PP J$. The structure group of $J$
preserves $N$ up to a scalar, and its Lie algebra is
$\mathfrak{s}=\mathfrak e_6\oplus\C$. In particular, it
preserves the affine rank-one cone. 

\begin{proposition}\label{lem:albert}
Let $J$ be the $27$-dimensional complex Albert algebra and let
$\mathfrak s=\mathfrak e_6\oplus\C$ be its structure algebra in the standard cominuscule representation. 
There does not exist a triple $(f, z, \omega)$ satisfying the following conditions,  where $f$ is a linear map
$f:J\to\mathfrak s$, $z\in J$ a nonzero element and
$\omega\in\Lambda^2J^*$:
\begin{equation}\label{eq:albert-torsion}
 \ker\omega=\C z,\qquad
 f(x)y-f(y)x=\omega(x,y)z\quad \forall x,y\in J.
\end{equation}
\end{proposition}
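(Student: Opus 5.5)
The plan is to reuse the mechanism of the Grassmannian, Lagrangian and spinor cases: pair the torsion equation in \eqref{eq:albert-torsion} with a vector that kills both terms $f(x)y$ and $f(y)x$ whenever $x,y$ lie in the affine rank-one cone $\widehat{C}\subset J$ (the cone over $E_6/P_1$). The natural test vector is $x\times y$. This should give $\omega(x,y)N(x,y,z)=0$ on $\widehat C\times\widehat C$, and irreducibility then forces $\omega=0$.

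\emph{Step 1 (an infinitesimal identity).} For $x\in\widehat C$ we have $x^\#=0$, so by \eqref{eq:albert-basic-identities}
\[
N(x,x,v)=\tfrac13\langle x^\#,v\rangle=0\qquad\text{for all } v\in J.
\]
Every $s\in\mathfrak s$ preserves $N$ up to a scalar, i.e. $N(sa,a,a)=\tfrac{c_s}{3}N(a)$ for all $a$ and some $c_s\in\C$. Polarizing gives
\[
2N(sx,x,v)+N(x,x,sv)=c_s\,N(x,x,v).
\]
For $x\in\widehat C$ both $N(x,x,\cdot)$ terms vanish, so $N(sx,x,v)=0$ for all $v$. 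By \eqref{eq:albert-basic-identities} this reads
\[
\langle x\times v,\,sx\rangle=6N(x,v,sx)=0\qquad\text{for all } v\in J,\ s\in\mathfrak s,\ x\in\widehat C.
\]

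\emph{Step 2 (pairing).} Take $x,y\in\widehat C$ and pair the torsion equation with $x\times y$. Step 1 with $s=f(x)$, applied to $y$, gives $\langle y\times x, f(x)y\rangle=0$. Step 1 with $s=f(y)$, applied to $x$, gives $\langle x\times y, f(y)x\rangle=0$. Hence
\[
0=\omega(x,y)\langle x\times y,z\rangle=6\,\omega(x,y)\,N(x,y,z)\qquad\text{for all } x,y\in\widehat C.
\]

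\emph{Step 3 (irreducibility).} $\widehat C$ is an irreducible affine cone, so $\widehat C\times\widehat C$ is irreducible. The two polynomial functions $\omega(x,y)$ and $N(x,y,z)$ therefore cannot both be nonzero on it while their product vanishes; one of them vanishes identically on $\widehat C\times\widehat C$.

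Suppose first that $N(x,y,z)=0$ for all $x,y\in\widehat C$. Since $\widehat C$ spans $J$, multilinearity gives $N(a,b,z)=0$ for all $a,b\in J$. Taking $a=b$ yields $\langle a^\#,z\rangle=0$ for all $a$. The adjoints $a^\#$ span $J$ (for instance $a^\#$ ranges over all of $J$ on invertible $a$, since $(a^\#)^\#=N(a)a$). Nondegeneracy of $\langle\ ,\ \rangle$ then gives $z=0$, contradicting $z\neq 0$.

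Otherwise $\omega$ vanishes on pairs of rank-one elements, and since these span $J$, bilinearity gives $\omega=0$. This contradicts $\ker\omega=\C z\ne J$.

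The main obstacle is Step 1: one must check carefully that the polarization normalizations in \eqref{eq:albert-basic-identities} are consistent with the way $\mathfrak s$ acts on $N$, in particular with the scalar part $\C$ of $\mathfrak s$. This is harmless here, because that scalar only multiplies $N(x,x,v)$, which vanishes on the rank-one cone. Note also that the argument never uses \eqref{eq:graph-closure} or the nilpotency of $f(x)$; only the torsion equation is needed, which is exactly why the statement is formulated with \eqref{eq:albert-torsion} alone.
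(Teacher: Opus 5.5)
Your proposal is correct and follows the paper's proof: you pair the torsion equation with $x\times y$ for rank-one $x,y$ to get $\omega(x,y)N(x,y,z)=0$ on $\widehat{\cC}\times\widehat{\cC}$, then use irreducibility and spanning of the cone to force $\omega=0$. The differences are cosmetic. You get $N(x,v,sx)=0$ by polarizing the infinitesimal invariance of $N$, where the paper uses $sx\in T_x\widehat{\cC}$ and $x\times sx=0$; your route also avoids any smoothness issue at $0$. You show $N(z,-,-)\not\equiv 0$ on the cone via the surjectivity of $a\mapsto a^\#$ onto invertible elements, where the paper uses the $E_6$-equivariant injectivity of $\zeta\mapsto N(\zeta,-,-)$.
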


\begin{proof}
The affine cone $\widehat{\cC}\subset J$ of rank-one elements is
defined by $x^\#=0$. It is the cone over the Cayley plane (see for example \cite{LM}), hence, it is
irreducible; its nonzero locus is smooth and its span is a
nonzero $E_6$-submodule of the irreducible module $J$, hence it spans $J$.

The polarized cubic defines an $E_6$-equivariant map
\begin{equation}\label{eq:albert-cubic-map}
 J\longrightarrow\Sym^2J^*,\qquad
 \zeta\longmapsto N(\zeta,-,-).
\end{equation}
This map is nonzero, so the irreducibility of $J$ makes it injective.

Suppose that $f,z,\omega$ satisfy \eqref{eq:albert-torsion}. For
nonzero rank-one elements $a,b$, the structure algebra
$\mathfrak s$ preserves $\widehat{\cC}$, and therefore
\[
 f(a)b\in T_b\widehat{\cC},\qquad
 f(b)a\in T_a\widehat{\cC}.
\]
Differentiating $x^\#=0$ in a tangent direction
$v\in T_x\widehat{\cC}$ gives $x\times v=0$. By
\eqref{eq:albert-basic-identities},
\[
 N(b,f(a)b,a)=0,\qquad N(a,f(b)a,b)=0.
\]
The symmetry of $N$, followed by \eqref{eq:albert-torsion}, yields
\begin{equation}\label{eq:albert-vanishing-product}
 0=N(f(a)b-f(b)a,a,b)
   =\omega(a,b)N(z,a,b).
\end{equation}

By the injectivity of \eqref{eq:albert-cubic-map}, together with the fact
that rank-one elements span $J$,  the bilinear form
$(a,b)\mapsto N(z, a, b)$ is not identically zero for the fixed nonzero vector $z$. Its nonvanishing locus on
$\widehat{\cC}_{\mathrm{sm}}\times \widehat{\cC}_{\mathrm{sm}}$
is therefore a nonempty dense open subset.
Then equation \eqref{eq:albert-vanishing-product} forces
$\omega$ to vanish on that dense open subset, hence on the product of the two rank-one cones. 
Since the cone spans $J$, we have $\omega=0$, contradicting $\ker\omega=\C z$.
\end{proof}

\subsection{Completion of the classification for $G$ simple}

\begin{proof}[Proof of Theorem~\ref{thm:main} for $G$ simple]
Assume that $X=G/P$ is a Heisenberg equivariant compactification with $G$ simple.
By Proposition~\ref{prop:reduction}, $X$ is either an adjoint variety
or a cominuscule variety. In the latter
case, we  apply Proposition~\ref{prop:graph} to the cases in
Table~\ref{tab:ihss-list}. Proposition~\ref{prop:grassmannians} leaves only
odd-dimensional projective spaces among Grassmannians.
Propositions~\ref{prop:lagrangians}, \ref{prop:spinors}, and
\ref{prop:quadrics} leave respectively $Q^3$, the projective-space
coincidence $\mathbb S_2\simeq\PP^1, \mathbb S_3\simeq\PP^3$, and $Q^3$. Parity excludes
the two $E_6$-varieties, while Proposition~\ref{lem:albert} excludes
$E_7/P_7$. This exhausts the table, which gives
\[
 X\simeq\PP^{2m+1}\quad(m\ge1),\qquad\text{or}\qquad X\simeq Q^3.
\]
The projective space $\PP^{2m+1}$ is the adjoint variety of the simple
group $\operatorname{Sp}_{2m+2}$. Therefore, the only non-adjoint
possibility is $Q^3$.
\end{proof}

\section{The case where $G$ is semisimple} \label{sec:nonsimple}

We start with a construction which provides many product examples, including those in Theorem \ref{thm:main}(ii)), of 
Heisenberg equivariant compactifications. This gives a generalization of the construction in \cite[Proposition 5.5]{FuHwangContact}.

\begin{proposition}\label{prop:nonsimple-family}
Let $Y$ be an equivariant compactification of the vector group
$\mathbb G_a^d$. For every integer $m\geq d$, the product
$ \PP^{2m+1-d} \times Y$
is an equivariant compactification of the $(2m+1)$-dimensional
Heisenberg group $H_{2m+1}$.
\end{proposition}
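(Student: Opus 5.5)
Following Example~\ref{ex:product}, the plan is to build an explicit action of $H=H_{2m+1}$ on $\PP^{2m+1-d}\times Y$ with trivial generic stabilizer. Write $\fh=V\oplus\C z$ with $\dim V=2m$ and $\omega$ nondegenerate. Since $d\le m$, fix a Lagrangian decomposition $V=L\oplus L'$ with $\dim L=\dim L'=m$, and choose a $d$-dimensional subspace $L_0\subset L$. Set $n=2m+1-d$. Then $\fh$ has the abelian subalgebra $L\oplus\C z$, and $L_0$ is an abelian subalgebra as well.

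\textbf{Action on $Y$.} Let $\pi:\fh\to\fh/(L_1\oplus L'\oplus\C z)$, where $L_1$ is a complement of $L_0$ in $L$. We need $\fa:=L_1\oplus L'\oplus\C z$ to be an ideal with abelian quotient of dimension $d$. We have $[\fh,\fh]=\C z\subset\fa$, so any subspace containing $z$ is an ideal, and the quotient $\fh/\fa\simeq L_0$ is abelian of dimension $d$. This gives a surjective homomorphism $H\to\mathbb G_a^d$ with kernel $A=\exp\fa$, and $H$ acts on $Y$ through it. The stabilizer of a point $y_0$ in the open $\mathbb G_a^d$-orbit is exactly $A$, which has dimension $2m+1-d=n$.

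\textbf{Action on $\PP^n$.} It now suffices to produce an $H$-action on $\PP^n$ whose restriction to $A$ has an open orbit with trivial stabilizer at some point $p$. Then the stabilizer of $(p,y_0)$ is $H_p\cap A=A_p=\{1\}$, so the orbit of $(p,y_0)$ has dimension $2m+1=\dim(\PP^n\times Y)$, hence is open and equivariantly isomorphic to $H$. For this, take $\PP^n=\PP(\C\oplus\C^n)$ and use the affine chart $\C^n\cong\fa$ (as vector spaces), letting $H$ act by affine transformations; affine maps extend to projective linear ones. Concretely, we want a Lie algebra map $\rho:\fh\to\mathfrak{aff}(\fa)=\mathfrak{gl}(\fa)\ltimes\fa$ such that $\rho(a)=(\ell(a),a)$ for $a\in\fa$, with translation part the identity on $\fa$. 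The orbit of $0$ under $A$ is then open, and $A$ unipotent acting by affine maps with bijective translation part has trivial stabilizer: indeed the orbit map $A\to\fa$ is étale and dominant between affine spaces, and for unipotent groups orbits are closed, so it is an isomorphism.

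The natural choice is to let $\fh$ act on $\fa$ via the adjoint action. Since $\fa$ is an ideal, define $\rho(h)=(\operatorname{ad}h|_{\fa},\,h_{\fa})$, where $h_\fa$ is a translation part. The cleanest version: identify $\PP^n\supset\C^n$ with $\fh/\fb$ for a subalgebra $\fb$ complementary to $\fa$. The subalgebra $\fb:=L_0$ is abelian and complementary to $\fa$. Let $H$ act on the homogeneous space $H/B$ with $B=\exp L_0$. Then $H/B\cong A$ as an $A$-variety (simply transitive), and it is an $n$-dimensional affine space. The key step is to embed $H/B$ equivariantly into $\PP^n$ as an open cell, i.e.\ to show the $H$-action on $H/B\cong\C^n$ is by affine (degree $\le1$) maps in suitable coordinates. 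I expect this to be the main obstacle.

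To get affine maps, use coordinates $(u_1,u',t)\in L_1\oplus L'\oplus\C$ via $A\to H/B$. Left translation by elements of $A$ is affine in these coordinates because of the Heisenberg product formula (quadratic term $\frac12\omega$, which is bilinear in the group variable and the point, hence affine in the point). Left translation by $\exp(\ell_0)$, $\ell_0\in L_0$, acts as $a\mapsto \exp(\ell_0)a\exp(-\ell_0)$ modulo $B$; this is conjugation, giving $a+\omega(\ell_0,a)z$, which is linear since $\omega(L_0,L)=0$ and $\omega(L_0,L')$ is linear. All these maps are affine, so they extend to $\PP^n$, and the stabilizer argument above finishes the proof. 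The delicate points are checking that the coset representative computation mod $B$ is consistent and that the result is a genuine group action. This uses $d\le m$ exactly through $L_0\subset L$ Lagrangian-isotropic, which makes $B$ abelian and the conjugation linear.
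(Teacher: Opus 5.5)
Your proof is correct, and its skeleton is the paper's. Take $L=E^*$, $L'=E$, $L_0=S=\ker\varphi$ and $L_1=\ker\operatorname{pr}_S$. Then your $B=\exp(L_0)$ is the paper's stabilizer $\exp(S)$ of $p_\infty$, your $A=\exp(L_1\oplus L'\oplus\C z)$ is the paper's $\ker q$, and both arguments end with $A\cap B=\{e\}$. In both, $d\le m$ enters only through the existence of a $d$-dimensional isotropic subspace, which is what makes $\exp(L_0)$ a subgroup complementary to $A$. Conjugation by $\exp(\ell_0)$ is linear whether or not $L_0$ is isotropic, so isotropy is not what buys linearity.

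The real difference is how the action on $\PP^{2m+1-d}$ is produced. The paper writes down an explicit representation on $\C e_0\oplus E\oplus\C e_\infty\oplus F$, integrates it using $\rho(x)^3=0$, and reads off the stabilizer from $\exp(\rho(x))e_\infty$. You instead start from $H/B\simeq A$ and note that $h=a_hb_h$, with $b_h=\exp(\ell_0)$, acts by $a\mapsto a_h(b_hab_h^{-1})$. In logarithmic coordinates this is the linear shear $u+tz\mapsto u+(t+\omega(\ell_0,u))z$ followed by a left translation, which is affine because the correction $\tfrac12\omega$ in the group law is bilinear. Hence $H\to\operatorname{Aff}_n\subset\operatorname{PGL}_{n+1}$.

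Your route needs no guessed representation and gives the stabilizer by construction; the paper's is fully explicit and checkable by matrix multiplication. The two constructions do not even give the same structure. On the common cell $H/B$, the paper's coordinate $t$ of $e_\infty+f+b+te_0$ equals your logarithmic one plus $\tfrac12\alpha_1(b)$, where $\alpha_1$ is the $L_1$-component. So for $m>d$ the two affine structures differ, and both work.

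Two small points on the write-up:

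\emph{The steps you flag as delicate are automatic.} The action on $A$ is by definition the left action on $H/B$ transported through the isomorphism $a\mapsto aB$, so it is a group action. The map $H\to\operatorname{Aff}_n$ is regular because the action map is.

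\emph{The preliminary $\mathfrak{aff}$ ansatz should be deleted, not just superseded.} As written, $\rho(h)=(\operatorname{ad}h|_{\mathfrak a},h_{\mathfrak a})$ is not a homomorphism when $d<m$. Its translation part fails the bracket relation by $[a_1,a_2]$, where $a_i$ are the $\mathfrak a$-components, because $\mathfrak a$ is then non-abelian. The correct formula, which is the differential of your $H/B$ action, is $\rho(a+\ell)=(\tfrac12\operatorname{ad}a+\operatorname{ad}\ell,\ a)$ for $a\in\mathfrak a$, $\ell\in L_0$.
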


\begin{proof}
Put $k=m-d\geq0$. Choose vector spaces $E$ and $F$ with
$\dim E=m$ and $\dim F=k$, together with a linear surjection
$\varphi:E^*\to F$. If $k=0$, take $F=0$ and $\varphi=0$. Write the
Heisenberg algebra as $\fh=E^*\oplus E\oplus\C z$ with Lie brackets given by:
\[
  [\alpha,b]=\alpha(b)z, [\fh, z]=0, [\alpha, \alpha'] = [b, b']=0, \forall \alpha, \alpha' \in E^*, b, b'\in E.
\]

Let $V=\C e_0\oplus E\oplus\C e_\infty\oplus F$. We now define a map $\rho:\fh\to\mathfrak{gl}(V)$ and then we will show this gives a representation of $\fh$ on $V$. For any $\alpha \in E^*, b \in E$, define endomorphisms of $V$ by
\[
\begin{aligned}
  \rho(\alpha)e_\infty&=\varphi(\alpha),&
  \rho(\alpha)v&=\alpha(v)e_0 && \forall v\in E,\\
  \rho(b)e_\infty&=b,&
  \rho(z)e_\infty&=e_0,
\end{aligned}
\]
and let them vanish on every summand on which their values have not
been specified. Direct calculation gives
\[
  [\rho(\alpha),\rho(b)]=\alpha(b)\rho(z),
\]
while all remaining defining brackets vanish. Hence
$\rho:\fh\to\mathfrak{gl}(V)$ is a representation. It is faithful:
if
$\rho(\alpha)+\rho(b)+c\rho(z)=0$, evaluation on arbitrary
$v\in E$ gives $\alpha=0$, and evaluation on $e_\infty$ then
gives $b=0=c$.

One checks that  $\rho(x)^3=0$ for all $x \in \fh$. The polynomial
exponential therefore integrates $\rho$ to a faithful algebraic
representation
\[
  R:H_{2m+1}\longrightarrow\operatorname{GL}(V).
\]
The induced projective representation is faithful as well, since a
scalar unipotent matrix is the identity. Moreover,  $\dim\PP(V)=m+k+1=2m+1-d.$

Let $p_\infty=[e_\infty] \in \mathbb{P}V$. We now compute its stabilizer in $H_{2m+1}$. For
$x=\alpha+b+cz\in\fh$, one has
\[
\begin{aligned}
  \rho(x)e_\infty&=\varphi(\alpha)+b+ce_0,\\
  \rho(x)^2e_\infty&=\alpha(b)e_0,\\
  \rho(x)^3e_\infty&=0.
\end{aligned}
\]
From which we deduce
\[
  \exp(\rho(x))e_\infty
  =e_\infty+\varphi(\alpha)+b+
    \left(c+\frac12\alpha(b)\right)e_0.
\]
It follows that $\exp(\rho(x)) \in \operatorname{Stab}_{H_{2m+1}}(p_\infty)$ if and only if  $\varphi(\alpha)=0$, $b=0$, and $c=0$. 
Thus $\operatorname{Stab}_{H_{2m+1}}(p_\infty)=\exp(S)$ with $ S:=\ker \varphi$, and $\dim S=m-k=d$.

As $Y$ is an equivariant compactification of $\mathbb G_a^d$, there exists an open subset $U_Y$ of $Y$ equivariantly isomorphic to $\mathbb{G}_a^d$. For  any point $y_0 \in U_Y$, its stabilizer is trivial.

Choose an isomorphism $\tau:S\to\Lie(U_Y)$ and a linear projection $\operatorname{pr}_S:E^*\to S$. Define
\[
  q:H_{2m+1}\longrightarrow U_Y,
  \qquad
  q\bigl(\exp(\alpha+b+cz)\bigr)
  =\exp_{U_Y}\!\bigl(\tau(\operatorname{pr}_S\alpha)\bigr).
\]
This is an algebraic group homomorphism. Indeed, in exponential coordinates, the Baker--Campbell--Hausdorff correction belongs to the
central line $\C z$, which $q$ kills. Its restriction
\[
  q|_{\exp(S)}:\exp(S)\xrightarrow{\sim}U_Y
\]
is an isomorphism.

Let $H_{2m+1}$ act diagonally on
$Y\times\PP(V)$, through $q$ on $Y$ and through $R$ on
$\PP(V)$. Since $U_Y$ acts simply transitively at $y_0$,
\[
  \operatorname{Stab}_{H_{2m+1}}(y_0)=\ker q.
\]
It follows that
\[
  \operatorname{Stab}_{H_{2m+1}}(y_0,p_\infty)
  =\ker q\cap\exp(S)=\{e\}.
\]
Therefore, the orbit through $(y_0,p_\infty)$ has dimension
\[
  2m+1=d+(2m+1-d)=\dim\bigl(Y\times\PP(V)\bigr),
\]
and hence is open. Its stabilizer is trivial, so the orbit map
identifies this open orbit equivariantly with $H_{2m+1}$, which concludes the proof.
\end{proof}

We next prove that if a product of rational homogeneous spaces is a Heisenberg equivariant compactification, then it must be of the form in the previous Proposition.


Recall that a contact structure on a complex manifold $M$ is a corank-one subbundle $F\subset TM$ whose Levi bracket
\[
\Lambda^2F\longrightarrow TM/F,\qquad
(v,v') \mapsto [v,v']\bmod F,
\]
is everywhere nondegenerate. Adjoint varieties are standard examples of projective contact manifolds.

\begin{lemma}\label{lem:contact}
Let $H$ be a  Heisenberg group, with Lie algebra $\mathfrak h$ satisfying $ [\mathfrak h,\mathfrak h]=Z(\mathfrak h)=\C z.$
Suppose that $H$ acts on a complex contact manifold $M$ with an open orbit, preserving its
contact distribution $F\subset TM$. Then the
infinitesimal stabilizer at every point of the open orbit is zero. In
particular, $\dim H=\dim M$.
\end{lemma}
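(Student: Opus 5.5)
The plan is to compare, at a point $x$ of the open orbit, the Levi form of $F$ with the Heisenberg bracket $\omega$ pulled back through the orbit map. Write $\xi_v$ for the fundamental vector field of $v\in\fh$. The map $v\mapsto\xi_v$ is an anti-homomorphism of Lie algebras, so $[\xi_v,\xi_w]=-\xi_{[v,w]}=-\omega(v,w)\xi_z$. Fix $x$ in the open orbit. The orbit map gives a surjection $\fh\to T_xM$ whose kernel is the infinitesimal stabilizer $\mathfrak s$. Put
\[
\fh_F=\{v\in\fh:\xi_v(x)\in F_x\}.
\]
Since $\fh\to T_xM$ is surjective and $F_x$ has codimension one, $\fh_F$ is a hyperplane containing $\mathfrak s$, and $\fh_F\to F_x$ is surjective with kernel $\mathfrak s$.

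\textbf{Key step: a local computation with a contact form.} Choose a local contact form $\theta$ near $x$ with $F=\ker\theta$. Since $H$ preserves $F$, each $\xi_v$ is a contact vector field, i.e.\ $L_{\xi_v}\theta=g_v\theta$ for some function $g_v$. Put $\varphi_w=\theta(\xi_w)$. Then
\[
\xi_v\varphi_w=(L_{\xi_v}\theta)(\xi_w)+\theta([\xi_v,\xi_w])=g_v\varphi_w+\theta([\xi_v,\xi_w]).
\]
Insert this into the Cartan formula for $d\theta(\xi_v,\xi_w)$ and evaluate at $x$ for $v,w\in\fh_F$, where $\varphi_v(x)=\varphi_w(x)=0$. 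This gives
\[
d\theta_x(\xi_v,\xi_w)=\theta_x([\xi_v,\xi_w])=-\omega(v,w)\,\theta_x(\xi_z(x)).
\]
Thus the Levi form on $F_x$, pulled back along $\fh_F\to F_x$, equals the constant $-\theta_x(\xi_z(x))$ times $\omega|_{\fh_F}$. This is the only genuinely computational step, and the main care needed is the bookkeeping of the terms $\varphi_v,\varphi_w$, which vanish only at $x$ and not nearby.

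\textbf{Conclusion.} Because the Levi form on $F_x$ is nondegenerate and $\fh_F\to F_x$ is surjective with kernel $\mathfrak s$, the radical of the pulled-back form is exactly $\mathfrak s$. We distinguish two cases.
\begin{enumerate}
\item If $\theta_x(\xi_z(x))=0$, the pulled-back form vanishes identically. Then $F_x=0$ and $\dim M=1$; this degenerate situation corresponds to the convention $m=0$, $H=\mathbb G_a$, and is checked directly.
\item Otherwise $\theta_x(\xi_z(x))\ne0$. Then $\xi_z(x)\notin F_x$, so $z\notin\fh_F$. Hence $\fh_F$ is a hyperplane complementary to $\C z=\ker\omega$, and projection identifies $(\fh_F,\omega|_{\fh_F})$ with the symplectic space $(\fh/\C z,\bar\omega)$. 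So $\omega|_{\fh_F}$ is nondegenerate, its radical is zero, and $\mathfrak s=0$.
\end{enumerate}
Finally, $\mathfrak s=0$ makes $\fh\to T_xM$ an isomorphism, so $\dim H=\dim M$.
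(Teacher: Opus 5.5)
Your main argument is correct and is essentially the paper's proof. Your $\fh_F$ is the paper's hyperplane $\mathfrak d$, and both arguments show $z\notin\mathfrak d$, so that $\omega|_{\mathfrak d}$ is nondegenerate, and then that $\mathfrak s$ lies in its radical.

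The differences are minor. First, you derive the identification of the pulled-back Levi form with $-\theta_x(\xi_z(x))\,\omega|_{\fh_F}$, using a local contact form and $L_{\xi_v}\theta=g_v\theta$; the paper only asserts this identification. Second, you get that $\mathfrak s$ lies in the radical directly, because the form factors through $\fh_F/\mathfrak s\simeq F_x$. The paper instead uses $[\mathfrak s,\mathfrak d]\subset\mathfrak d\cap\C z=0$.

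Your case (1) is mishandled, however. The hypothesis $[\fh,\fh]=\C z$ already rules out $m=0$. For $m\ge1$, a one-dimensional $M$ with $F=0$ is a genuine counterexample to the statement. Take the $3$-dimensional Heisenberg group acting on $\PP^1$ by $[s:t]\mapsto[s+at:t]$: it has an open orbit, trivially preserves $F=0$, and has a $2$-dimensional infinitesimal stabilizer. So this case cannot be ``checked directly''.

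It has to be excluded by the convention that a contact manifold has $\dim M\ge3$, so that $F_x\ne0$. Under that convention, case (1) simply contradicts the nondegeneracy of the Levi form. The paper uses this convention implicitly at the step ``the Levi form vanishes identically, contradicting the contact condition''. It is harmless in the application, where $M$ is an adjoint variety other than a projective space, so $\dim M\ge3$.
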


\begin{proof}
Fix a point $x$ of the open orbit. Let $\mathfrak s:=\ker (\mathfrak h\longrightarrow T_xM )$ be its infinitesimal stabilizer, and put
\[
 \mathfrak d:=\{\xi\in\mathfrak h:\xi_M(x)\in F_x\}.
\]
Since the orbit is open, the infinitesimal orbit map is surjective. Thus $\mathfrak s \subset \mathfrak d$, $ \mathfrak d/\mathfrak s\simeq F_x$, and $\mathfrak d$ is a hyperplane in $\mathfrak h$.

The invariance of $F$ under the isotropy at $x$ gives $[\mathfrak s,\mathfrak d]\subset\mathfrak d.$
Moreover, under the identification
$\mathfrak d/\mathfrak s\simeq F_x$, the Levi form of $F$ is induced, up to sign, by
\[
 \Lambda^2(\mathfrak d/\mathfrak s) \longrightarrow \mathfrak h/\mathfrak d,
 \qquad
 (\bar\xi,\bar\eta)\longmapsto [\xi,\eta]\bmod\mathfrak d.
\]

If $z\in\mathfrak d$, then
$ [\mathfrak h,\mathfrak h]=\C z\subset\mathfrak d$, so the above Levi form vanishes identically, contradicting the contact condition. 
Therefore $z\notin\mathfrak d$, and hence
$\mathfrak h=\mathfrak d\oplus\C z.$
So projection onto $\mathfrak h/\C z$ restricts to an
isomorphism $\mathfrak d\xrightarrow{\ \sim\ }\mathfrak h/\C z.$
The Heisenberg bracket induces a nondegenerate alternating form on
$\mathfrak h/\C z$; therefore its restriction $\Lambda^2\mathfrak d\longrightarrow\C z $ is nondegenerate.

On the other hand, $[\mathfrak s,\mathfrak d]\subset\mathfrak d$ and the Heisenberg
identity give
\[
 [\mathfrak s,\mathfrak d]
 \subset
 \mathfrak d\cap[\mathfrak h,\mathfrak h]
 =\mathfrak d\cap\C z
 =0.
\]
Thus $\mathfrak s\subset\mathfrak d$ lies in the radical of the
nondegenerate alternating form on $\mathfrak d$. Hence
$\mathfrak s=0.$
The dimension assertion follows from the openness of the orbit.
\end{proof}

\begin{proposition}\label{prop:semisimple-reduction}
Let $X=G/P$ be a projective rational homogeneous variety of dimension $2m+1$ and write $X$ as a product $X=\prod_{i=1}^sX_i$ with $s\geq2$,
where $G_i:=\Aut^0(X_i)$ is simple and $X_i\simeq G_i/P_i$. Suppose
that $X$ is an equivariant compactification of a Heisenberg group $H$. Let
$\rho_i:H\to G_i$ be the homomorphism induced by the action on the
$i$-th factor. Then:
\begin{enumerate}
\item $H$ has an open orbit on every $X_i$;
\item there is a unique index $i_0$ for which $\rho_{i_0}$ is
injective, and $2\dim X_{i_0}>\dim X$;
\item every $X_i$ is cominuscule;
\item $X_{i_0}$ is a projective space.
\end{enumerate}
\end{proposition}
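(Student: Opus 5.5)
The plan is to prove the four assertions in order. Each uses only the diagonal structure of the action and the Heisenberg bracket. Fix $x=(x_1,\dots,x_s)$ in the open $H$-orbit, let $\mathfrak s_i\subset\fh$ be the infinitesimal stabilizer of $x_i$, and write $\fh=V\oplus\C z$ with the symplectic form $\omega$ on $V\simeq\fh/\C z$.

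\emph{Step 1: assertion (1).} The projection $X\to X_i$ is $H$-equivariant and surjective. The image $H\cdot x_i$ of the open orbit is therefore dense and constructible, and an orbit with these properties is open.

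\emph{Step 2: assertion (2).} Since $H\cdot x$ is open with trivial stabilizer, $\bigcap_i\mathfrak s_i=0$ and $\sum_i\operatorname{codim}\mathfrak s_i=\dim\fh$.

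First I show some $\rho_i$ is injective. The kernel of $\rho_i$ is a unipotent normal subgroup, hence connected in characteristic zero. Its Lie algebra is an ideal of the nilpotent algebra $\fh$, so if it is nonzero it meets the center, i.e. it contains $z$. If every $\rho_i$ had nontrivial kernel, then $z$ would act trivially on $X$, which is impossible.

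Next, suppose $\rho_i$ is injective. If $z\in\mathfrak s_i$, then centrality of $z$ would make it fix the dense orbit $H\cdot x_i$ pointwise, hence all of $X_i$, contradicting injectivity. So $z\notin\mathfrak s_i$. Then $[\mathfrak s_i,\mathfrak s_i]\subset\mathfrak s_i\cap\C z=0$, so $\mathfrak s_i$ projects injectively onto an $\omega$-isotropic subspace of $V$. This gives $\dim\mathfrak s_i\le m$, hence $\dim X_i\ge m+1>\dim X/2$. Two such indices would have total dimension exceeding $\dim X$, which gives uniqueness of $i_0$.

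\emph{Step 3: assertion (3).} Take $i\ne i_0$. Then $\ker\rho_i\ni z$, so $\rho_i(H)$ is a quotient of $H/Z(H)$ and hence a vector group. For an abelian group acting with an open orbit, the generic stabilizer acts trivially on the dense orbit. Hence $\rho_i(H)$ acts simply transitively on it, $X_i$ is a $\mathbb G_a^{k}$-compactification, and $X_i$ is cominuscule by Arzhantsev's theorem.

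For $i_0$, put $\mathfrak t=\bigcap_{i\ne i_0}\mathfrak s_i$. It contains $z$, it is complementary to $\mathfrak s_{i_0}$, and the corresponding subgroup acts on $X_{i_0}$ with an open orbit and trivial infinitesimal stabilizer at $x_{i_0}$. Lemma~\ref{lem:filtered-complement} applied to $\mathfrak t$ gives $\dim[\fg_-,\fg_-]\le\dim[\mathfrak t,\mathfrak t]\le 1$. Lemma~\ref{lem:parabolic-dichotomy} then says $X_{i_0}$ is cominuscule or adjoint.

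To exclude the adjoint case, note that $G_{i_0}$ preserves the contact structure of $X_{i_0}$. Lemma~\ref{lem:contact} would then give $\dim X_{i_0}=\dim H=\dim X$, contradicting $s\ge2$ (here I use that the adjoint varieties arising are the contact ones, with $\PP^{2m+1}$ treated as cominuscule). So $X_{i_0}$ is cominuscule.

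\emph{Step 4: assertion (4).} This is the main step. Apply Lemma~\ref{lem:standard-position} to $\rho_{i_0}(H)\subset G_{i_0}$, so that $\fh\subset\fu=\fn\ltimes W$ with $\mathfrak s:=\mathfrak s_{i_0}\subset\fn$ and $\mathfrak t$ mapping isomorphically onto $W$. Since $\fh=\mathfrak s\oplus\mathfrak t$ with $z\in\mathfrak t$ and $\mathfrak s$ isotropic, nondegeneracy of $\omega$ shows that the pairing $\mathfrak s\times\mathfrak t\to\C z$ given by the bracket is nondegenerate on $\mathfrak s$.

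Now take $\sigma\in\mathfrak s$ and an element $f(y)+y\in\mathfrak t$ with $y\in W$. Comparing $W$-components in $[\sigma,f(y)+y]\in\C z$ gives $\sigma\cdot y\in\C\bar z$, where $\bar z$ is the $W$-part of $z$. Moreover this is nonzero for a suitable $y$ when $\sigma\ne0$. So each nonzero $\sigma$ acts on $W$ by a nonzero nilpotent operator of rank one. When $\dim X\ge 2$, $\mathfrak s\neq0$ because $\dim X_{i_0}<\dim X$. Lemma~\ref{lem:rank-one} then forces $X_{i_0}$ to be a projective space.

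The delicate point I expect is checking that $\bar z\ne0$ and that the $W$-component computation is not spoiled by the $\fn$-parts of $\mathfrak t$. The first I would argue via $\mathfrak t\cap\fp=0$. The second holds because $[\fn,f(y)]\subset\fn$ contributes nothing to $W$.
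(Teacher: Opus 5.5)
Your proposal is correct and follows essentially the same route as the paper. Both proofs use the ideal-contains-$z$ argument and the isotropy bound for (2), Arzhantsev's theorem for the non-injective factors, Lemmas~\ref{lem:filtered-complement}, \ref{lem:parabolic-dichotomy} and \ref{lem:contact} for (3), and Lemmas~\ref{lem:standard-position} and \ref{lem:rank-one} for (4).

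There are only two local variations, and both are sound. First, you apply Lemma~\ref{lem:filtered-complement} verbatim to the complement $\mathfrak t=\bigcap_{i\ne i_0}\mathfrak s_i$ inside the simple algebra $\fg_{i_0}$, whereas the paper applies it to $\fh$ inside $\bigoplus_i\fg_i$. Second, in (4) you use the pairing $\mathfrak s\times\mathfrak t\to\C z$ to show directly that each nonzero $\sigma\in\mathfrak s$ acts on $\fg_{-1}$ with rank exactly one. The paper instead shows the rank is at most one, deduces from Lemma~\ref{lem:rank-one} that the operator vanishes, and then forces $\mathfrak s=0$ using faithfulness of $\fg_0$ on $\fg_{-1}$ and injectivity of $d\rho_{i_0}$.
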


\begin{proof}
Write
$\fh=V\oplus\C z$, where $\dim V=2m$ and
$[v,w]=\omega(v,w)z$ for a nondegenerate alternating form $\omega$.
Choose $x=(x_1,\ldots,x_s)$ in the free open $H$-orbit. For each $i$,
let $d_i:\fh\to T_{x_i}X_i$ be the infinitesimal orbit map. Since
$d=\bigoplus_i d_i:\fh\to T_xX$ is an isomorphism,
\[
 2m+1=\rank d\leq\sum_i\rank d_i
   \leq\sum_i\dim X_i=2m+1.
\]
Thus $\rank d_i=\dim X_i$ for every $i$, which proves (1).

We claim that every nonzero ideal $I\subset\fh$ contains $\C z$. This is trivial if $I \subset \C z$.  Now assume 
$I\not\subset\C z$, choose $v+az\in I$ with $v\ne0$ and then choose
$w\in V$ with $\omega(v,w)\ne0$; thus
$0\ne[v+az,w]\in I\cap\C z$, proving the claim.
Suppose that $\rho_i$ is not injective.
Then $\ker(d\rho_i)\ne0$: otherwise $\ker\rho_i$ would be finite, while
a connected unipotent group over $\C$ has no nontrivial finite subgroup.
It follows that $\C z\subset\ker(d\rho_i)$, so
$A_i:=\rho_i(H)$ is a connected abelian unipotent group, hence a vector
group.

The open $A_i$-orbit through $x_i$ is free. In fact, if
$a\in A_i$ fixes $x_i$, then commutativity gives
$a(bx_i)=b(ax_i)=bx_i$ for all $b\in A_i$. Thus $a$ fixes the open
orbit pointwise and therefore acts trivially on $X_i$. Since $A_i$ is
the effective image of $H$ in $G_i$, we have $a=e$. It follows that
$X_i$ is an equivariant compactification of
$\mathbb G_a^{\dim X_i}$ and is cominuscule by
\cite[Theorem~1]{Arzhantsev}.

There is at least one injective $\rho_i$; otherwise $d(z)=0$. Let
$\rho_j$ be injective and put $\mathfrak s_j:=\ker d_j$. We have
$\mathfrak s_j\cap\C z=0$. In fact, if $z\in\mathfrak s_j$, then the
central subgroup $\exp(\C z)$ fixes the open orbit $H\cdot x_j$
pointwise and hence lies in $\ker\rho_j$. It follows that
$[\mathfrak s_j,\mathfrak s_j]=0$. The quotient map
$\fh\to\fh/\C z\simeq V$ is injective on $\mathfrak s_j$, and its
image is $\omega$-isotropic. Hence $\dim\mathfrak s_j\leq m$ and
\[
 \dim X_j=2m+1-\dim\mathfrak s_j\geq m+1>\frac12\dim X.
\]
There can therefore be only one injective factor; denote it by $i_0$.
This proves (2), and the preceding paragraph already shows that
$X_i$ is cominuscule for $i\ne i_0$.

Let $\fg=\bigoplus_i\fg_i$ and $\fp=\bigoplus_i\fp_i$, where
$\fg_i=\Lie(G_i)$ and $\fp_i=\Lie((G_i)_{x_i})$. The diagonal action
gives $\fg=\fh\oplus\fp$ as vector spaces. Applying the proof of
Lemma~\ref{lem:filtered-complement}, which applies verbatim  to the direct sum of the fundamental
gradings gives
\[
 \sum_i\dim[(\fg_i)_-,(\fg_i)_-]
 =\dim[\fg_-,\fg_-]\leq1.
\]
By Lemma~\ref{lem:parabolic-dichotomy}, every $X_i$ is cominuscule or
adjoint, with at most one adjoint factor. The only possible adjoint
factor is $X_{i_0}$. If $X_{i_0}$ were an adjoint variety different from projective spaces,
$G_{i_0}=\Aut^0(X_{i_0})$ would preserve its homogeneous contact
distribution. Lemma~\ref{lem:contact} would give
$\dim H=\dim X_{i_0}$, contradicting $s\geq2$. With our convention
$G_i=\Aut^0(X_i)$, a projective space is in the cominuscule
presentation. Thus every $X_i$ is cominuscule, proving (3).

It remains to prove (4). Put $\mathfrak s:=\mathfrak s_{i_0}$. The
infinitesimal orbit map identifies
$\fh/\mathfrak s$ with $T_{x_{i_0}}X_{i_0}$. For
$u\in\mathfrak s$, the infinitesimal isotropy operator is
$\bar v\longmapsto\overline{[u,v]}.$
Its image is contained in $\C\bar z$, and it kills $\bar z$ because
$z$ is central. Hence its square is zero and its rank is at most one.


Apply Lemma~\ref{lem:standard-position} to the open action of
$\rho_{i_0}(H)$ on $X_{i_0}$. After conjugation, we may choose a
Borel subgroup $B=TU\subset G_{i_0}$ and a $T$-fixed point
$o\in X_{i_0}$ such that $\rho_{i_0}(H)\subset U$ and $o$ belongs
to the open $\rho_{i_0}(H)$-orbit.

Let
\[
 \fg_{i_0}=\fg_{-1}\oplus\fg_0\oplus\fg_1,
 \qquad \fp_{i_0}=\fg_0\oplus\fg_1,
\]
be the grading defined by the stabilizer of $o$, using the Borel
opposite to $B$. Then
$\Lie(U)=\fn\ltimes\fg_{-1}$, where $\fn=\Lie(U)\cap\fg_0$. Therefore
\[
 d\rho_{i_0}(\fh)\subset\fn\oplus\fg_{-1},
 \qquad d\rho_{i_0}(\mathfrak s)\subset\fn\subset\fg_0.
\]
Under the identifications
$\fh/\mathfrak s\simeq T_oX_{i_0}\simeq\fg_{-1}$, the action of
$d\rho_{i_0}(\mathfrak s)$ on $\fg_{-1}$ is precisely the
infinitesimal isotropy action above. Every element of $\fn$ acts
nilpotently. If $X_{i_0}$ were not a projective space,
Lemma~\ref{lem:rank-one} would force every such rank-at-most-one operator
to vanish. The $\fg_0$-action on $\fg_{-1}$ is faithful, so
$d\rho_{i_0}(\mathfrak s)=0$. Since $d\rho_{i_0}$ is injective,
$\mathfrak s=0$. But
\[
 \dim\mathfrak s=\dim H-\dim X_{i_0}
 =\sum_{i\ne i_0}\dim X_i>0,
\]
a contradiction. Hence $X_{i_0}$ is a projective space.
\end{proof}

\begin{proof}[Proof of Theorem~\ref{thm:main}]
The necessity for a simple connected automorphism group was proved in
Section~3, and the remaining necessity follows from
Proposition~\ref{prop:semisimple-reduction}. Conversely, adjoint
varieties are compactifications of the Heisenberg group defined by their
contact gradings, and $Q^3$ is covered by
Proposition~\ref{prop:quadrics}. Let
$X\simeq\PP^{2m+1-d}\times Y$ be as in Theorem~\ref{thm:main}. By
\cite[Theorem~1]{Arzhantsev}, the product $Y$ is an equivariant
compactification of $\mathbb G_a^d$. Since $d\leq m$,
Proposition~\ref{prop:nonsimple-family} supplies the required
$H_{2m+1}$-action on $X$.
\end{proof}

\end{document}